\newtheorem{assumption}{Assumption}
\newtheorem{lemma}{Lemma}
\newtheorem{theorem}{Theorem}
\newtheorem{corollary}{Corollary}
\newtheorem{example}{Example}
\newtheorem{proposition}{Proposition}
\newcommand{\nc}{\newcommand}
\newcommand{\Yildirim}{Y{\i}ld{\i}r{\i}m}
\nc{\cA}{{\cal A}}
\nc{\cB}{{\cal B}}
\nc{\cC}{{\cal C}}
\nc{\cD}{{\cal D}}
\nc{\cE}{{\cal E}}
\nc{\cG}{{\cal G}}
\nc{\cF}{{\cal F}}
\nc{\cH}{{\cal H}}
\nc{\cI}{{\cal I}}
\nc{\cK}{{\cal K}}
\nc{\cL}{{\cal L}}
\nc{\cM}{{\cal M}}
\nc{\cN}{{\cal N}}
\nc{\cO}{{\cal O}}
\nc{\cP}{{\cal P}}
\nc{\cQ}{{\cal Q}}
\nc{\cR}{{\cal R}}
\nc{\cS}{{\cal S}}
\nc{\cT}{{\cal T}}
\nc{\cV}{{\cal V}}
\nc{\tx}{{\tilde x}}
\nc{\la}{{\langle}}
\nc{\ra}{{\rangle}}
\nc{\ts}{\textsuperscript}
\nc{\R}{\mathbb{R}}
\nc{\N}{\mathbb{N}}
\nc{\Q}{\mathbb{Q}}
\nc{\Z}{\mathbb{Z}}
\DeclareMathOperator{\diag}{diag}
\DeclareMathOperator{\rank}{rank}
\DeclareMathOperator{\trace}{trace}
\DeclareMathOperator{\relint}{relint}
\DeclareMathOperator{\intK}{int}
\begin{document}

\title{Relaxations of KKT Conditions do not Strengthen Finite RLT and SDP-RLT Bounds for Nonconvex Quadratic Programs
}


\author{E.~Alper \Yildirim\thanks{School of Mathematics and Maxwell Institute of Mathematical Sciences,
The University of Edinburgh, Peter Guthrie Tait Road, Edinburgh, EH9 3FD, United Kingdom. 
    \tt{E.A.Yildirim@ed.ac.uk}}}



\date{June 11, 2025}

\maketitle
\newcommand{\keywords}[1]{\textbf{Keywords:} #1}

\newcommand{\subclass}[1]
{\textbf{Mathematics Subject Classification (2020):} #1}

\begin{abstract}
We consider linear and semidefinite programming relaxations of nonconvex quadratic programs given by the reformulation-linearization technique (RLT relaxation), and the Shor relaxation combined with the RLT relaxation (SDP-RLT relaxation). By incorporating the first-order optimality conditions, a quadratic program can be formulated as an optimization problem with complementarity constraints. We investigate the effect of incorporating optimality conditions on the strength of the RLT and SDP-RLT relaxations. Under the assumption that the original relaxations have a finite optimal value, we establish that the RLT and SDP-RLT bounds arising from the complementarity formulation agree with their counterparts. We present several classes of instances of quadratic programs with unbounded RLT and SDP-RLT relaxations to illustrate the different behavior of the relaxations of the complementarity formulation. In particular, our examples reveal that relaxations of optimality conditions may even yield misleading information on certain families of instances.

\medskip

\noindent
\keywords{Quadratic programming; Reformulation linearization technique; RLT relaxation; Semidefinite relaxation; Optimality conditions}

\medskip

\noindent
\subclass{90C20, 90C26, 90C05, 90C22, 90C46}
\end{abstract}

\section{Introduction}\label{sec1}

In this paper, we consider the problem of minimizing a (possibly nonconvex) quadratic function over a polyhedron:
\[
\tag{$\mathsf{QP}$} \label{QP} \quad \nu^* = \min\limits_{x \in \R^n} \left\{\mathsf{q}(x): x \in \mathsf{F} \right\},
\]
where $\mathsf{q}: \R^n \to \R$ and $\mathsf{F} \subseteq \R^n$ are given by
\begin{align}
\mathsf{q}(x) & = \textstyle\frac{1}{2} x^\top Q x + c^\top x, \label{def_qx} \\ 
\mathsf{F} & = \left\{x \in \R^n: G^\top x \leq g, \quad H^\top x = h \right\}. \label{def_F}
\end{align}

\eqref{QP} is a quadratic program defined by the parameters $Q \in \R^{n \times n}$, $c \in \R^n$, $G \in \R^{n \times m}$, $H \in \R^{n \times p}$, $g \in \R^m$, and $h \in \R^p$. Without loss of generality, we assume that $Q$ is a symmetric matrix. We denote the optimal value of \eqref{QP} by $\nu^* \in \R \cup \{-\infty\} \cup \{+\infty\}$, with the usual conventions for infeasible and unbounded problems.

Quadratic programs play a central role in optimization due to numerous applications (see, e.g.,~\cite{FuriniTBFGGLLMM19}). Unless $\mathsf{q}(x)$ is a convex function, \eqref{QP} is an NP-hard problem (see, e.g.,~\cite{Sahni74,Pardalos1991QuadraticPW}).

Global solution algorithms for \eqref{QP} are typically based on branch-and-bound methods, which recursively partition the feasible region into smaller subregions and generate upper and lower bounds on the objective function value in each subregion (see, e.g.,~\cite{LS2013}). Lower bounds are usually given by solving convex relaxations of \eqref{QP}. Strong relaxations provide tighter lower bounds, which can potentially improve the solution time by closing the gap earlier.  

In this paper, we consider two convex relaxations of \eqref{QP}. The first one, referred to as the RLT relaxation, is a linear programming relaxation given by the reformulation-linearization technique (see, e.g., ~\cite{Sherali1999}). The RLT relaxation is obtained by generating implied quadratic constraints for $\mathsf{F}$ and then replacing each quadratic term with a new variable. The second relaxation, referred to as the SDP-RLT relaxation, is a semidefinite programming relaxation obtained by combining the RLT relaxation with the Shor relaxation~\cite{shor1987approach}, further tightening the relationship between the original variables and the new variables representing quadratic terms. 

By incorporating the first-order optimality conditions, \eqref{QP} can be formulated as an optimization problem with complementarity constraints (see, e.g.,~\cite{GT73}). Unless \eqref{QP} is unbounded, the two formulations are equivalent. On the other hand, it is well-known that convex relaxations arising from equivalent formulations may not, in general, be equivalent (see, e.g.,~\cite[Example 3]{QiuY23a}).  

Motivated by this observation, we investigate the effect of incorporating the first-order optimality conditions on the strength of RLT and SDP-RLT relaxations of \eqref{QP}. Our contributions are as follows:

\begin{enumerate}

    \item If the RLT relaxation of \eqref{QP} has a finite optimal value, we show that incorporating optimality conditions does not improve the strength of the RLT relaxation (see Theorem~\ref{rltcserlt}).
    
    \item Unless $p = 0$, we show that the Slater condition fails for the SDP-RLT relaxation (see Lemma~\ref{Slater}). 

    \item Using facial reduction techniques, we present an equivalent formulation of the SDP-RLT relaxation as a conic optimization problem that satisfies the Slater condition (see Proposition~\ref{srvssrr}). 

    \item By relying on the equivalence between the two formulations, we establish necessary and sufficient optimality conditions for the original SDP-RLT relaxation (see Proposition~\ref{opt_cond_srr}).

    \item We establish that incorporating optimality conditions does not improve the strength of the SDP-RLT relaxation if its optimal value is attained (see Theorem~\ref{srltcsesrlt}).

    \item We present several classes of instances of \eqref{QP} with unbounded RLT and SDP-RLT relaxations to illustrate the different behavior of the relaxations arising from the complementarity formulation. In particular, our examples reveal that the information from the relaxations of optimality conditions should be treated with some caveat.
\end{enumerate}

We briefly motivate our choice of $\mathsf{F}$ in the general form given by \eqref{def_F}. RLT and SDP-RLT relaxations are, in general, highly dependent on the particular representation of $\mathsf{F}$. For instance, if one replaces each equality constraint with two inequality constraints, then the resulting RLT relaxation is, in general, weaker than the original one (see, e.g., the discussion at the end of Section 6 in~\cite{QiuY23a}). Furthermore, converting $\mathsf{F}$ into a more convenient form may increase the dimension of the problem, which may adversely affect the computational cost of the relaxations. Therefore, throughout this manuscript, we will assume the general form given by \eqref{def_F}.

This paper is organized as follows. We review the literature in Section~\ref{lit_rev} and define our notation in Section~\ref{notation}. We present the first-order optimality conditions and incorporate these conditions to obtain a reformulation of \eqref{QP} as an optimization problem with complementarity constraints in Section~\ref{Sec2}. Section~\ref{Sec3} is devoted to the comparison of the RLT relaxations arising from the original formulation and the complementarity formulation. We compare the corresponding SDP-RLT relaxations in Section~\ref{Sec4}. Section~\ref{Sec5} concludes the paper.

\subsection{Literature Review} \label{lit_rev}

In addition to playing a central role in guiding algorithmic methods and their termination criteria, optimality conditions can be taken into consideration prior to solving an optimization problem. For instance, by directly incorporating the first-order optimality conditions, \eqref{QP} can be reformulated as a linear programming problem with complementarity constraints~\cite{GT73}. Such reformulations can then be utilized to develop solution methods based on mixed integer linear programming approaches (see, e.g.,~\cite{HuMP12,XiaVZ20,GondzioY21}).

Optimality conditions can also be employed to develop branching strategies in a branch-and-bound framework. For box-constrained quadratic programs, Hansen et al.~\cite{HPBR93} propose a finite branch-and-bound algorithm by employing first-order optimality conditions to subdivide the feasible region (see also Audet et al.~\cite{AudetHJS99} for a similar method for disjoint bilinear programming), and Vandenbussche and Nemhauser~\cite{VandenbusscheN05} utilize the optimality conditions from~\cite{HPBR93} to generate valid inequalities for a linear programming relaxation and develop a finite branch-and-bound algorithm using a similar branching rule~\cite{VandenbusscheN05a}. By using semidefinite programming relaxations, Burer and Vandenbussche propose a similar finite branch-and-bound method for box-constrained quadratic programs~\cite{BurerV09} and general quadratic programs with bounded feasible regions~\cite{BurerV08} (see also \cite{ChenB12} for a copositive relaxation-based approach). In addition to the first-order optimality conditions, Burer and Chen~\cite{BurerC11} propose semidefinite programming relaxations that incorporate additional information from second-order optimality conditions in a branch-and-bound framework for box-constrained quadratic programs.

In this paper, our focus is on the effect of incorporating optimality conditions on the strength of convex relaxations. The relaxations considered in this paper are similar to those in~\cite{VandenbusscheN05,BurerV08,BurerV09,BurerC11,BaoST11,ChenB12} that utilize optimality conditions in various ways. In particular, we compare the RLT and SDP-RLT relaxations of \eqref{QP} with their counterparts arising from the complementarity formulation obtained by adding the first-order optimality conditions. In fact, the resulting SDP-RLT relaxation is equivalent to the doubly nonnegative relaxation considered in the branch-and-bound method of~\cite{ChenB12} (see, e.g.,~\cite[Proposition 9]{BaoST11}). 

In particular, our results are closely related to and build on~\cite{BurerC11}, where they establish the equivalence of the basic Shor relaxation and various other semidefinite relaxations arising from the incorporation of first- and second-order optimality conditions for box-constrained quadratic programs. In this paper, we consider general quadratic programs, whereas the results of~\cite{BurerC11} are applicable to box-constrained quadratic programs. In contrast with~\cite{BurerC11}, which considers variations of the basic Shor relaxation, we consider tighter semidefinite programming relaxations given by combining the Shor relaxation with the RLT relaxation of the complementarity formulation. On the other hand, while the relaxations of~\cite{BurerC11} also incorporate additional information from the second-order optimality conditions, our relaxations utilize only the first-order conditions.

\subsection{Notation} \label{notation}

We use $\R^n$, $\R^n_+$, $\R^n_{++}$, $\R^{m \times n}$, $\cS^n$, $\cS^n_+$, and $\cS^n_{++}$, to denote the $n$-dimensional Euclidean space, the nonnegative orthant, the positive orthant, the set of $m \times n$ real matrices, the space of $n \times n$ real symmetric matrices, the cone of $n \times n$ positive semidefinite matrices, and the cone of $n \times n$ positive definite matrices, respectively. For $U \in \cS^n$, we also use $U \succeq \mathbf{0}$ and $U \succ \mathbf{0}$ to denote that $U$ is positive semidefinite and positive definite, respectively.  We use 0 to denote the real number zero, whereas the vector of all zeroes, as well as the matrix of all zeroes are denoted by $\mathbf{0}$, whose dimension should always be clear from the context. We denote matrices, vectors, and scalars by uppercase Roman letters, lowercase Roman letters, and Greek letters, respectively, with the exceptions of $i$ and $j$ used for indexing purposes and $m, n$, and $p$ reserved for dimensions. We denote subsets of $\cS^n$ by calligraphic letters. For all other objects such as sets, functions, and problem labels, we use the sans serif font type. All inequalities on vectors or matrices are understood to hold componentwise. We reserve $I$ for the identity matrix, whose dimension should be implied by the context. The transpose of $U \in \R^{m \times n}$ is denoted by $U^\top$. We denote by $\diag(U) \in \R^n$ the vector obtained from the diagonal entries of $U \in \R^{n \times n}$. 
The rank of a matrix $A \in \R^{m \times n}$ is denoted by $\rank(A)$. 
For $u \in \R^n$ and $U \in \R^{m \times n}$, we use $u_j$ and $U_{ij}$ to denote the $i$th component of $u$ and the $(i,j)$-component of $U$, respectively. For $U \in \R^{m \times n}$ and $V \in \R^{m \times n}$, the trace inner product is denoted by 
\[
\langle U, V \rangle = \trace(U^\top V) = \sum\limits_{i=1}^m \sum\limits_{j = 1}^n U_{ij} V_{ij}.
\]
For a given closed convex cone $\cK \subseteq \cS^n$, the dual cone, denoted by $\cK^*$, is given by
\[
\cK^* = \left\{Y \in \cS^n: \langle X, Y \rangle \geq 0, \quad \text{for all}~X \in \cK \right\}.
\]
We use $\relint(\cK)$ and $\intK(K)$ to denote the relative interior and interior of $\cK$, respectively.

Throughout this manuscript, we adopt the following conventions for the linear algebra of empty matrices and empty vectors~\cite{deB90}. If $U \in \R^{m \times 0}$ is an $m \times 0$ empty matrix, $v \in \R^{0}$ is a $0 \times 1$ empty vector, and $V \in \R^{0 \times n}$ empty matrix, then $U \, v = \mathbf{0} \in \R^{m}$ and $U \, V = \mathbf{0} \in \R^{m \times n}$, whereas we use the usual linear algebra rules for the product of an empty matrix and a nonempty matrix or vector of conforming dimensions, i.e., the product of a $0 \times m$ empty matrix with and $m \times n$ matrix is a $0 \times n$ empty matrix. Finally, we use
\begin{equation} \label{emptyinv}
U \in \R^{0 \times 0} \Longrightarrow U^{-1} = U.
\end{equation}

\section{Optimality Conditions and a Formulation with Complementarity Constraints} \label{Sec2}

In this section, we review the first-order optimality conditions for \eqref{QP}. By incorporating the optimality conditions, we present a formulation of \eqref{QP} as an optimization problem with complementarity constraints.

Throughout this paper, we make the following assumption.

\begin{assumption} \label{assump1}
$\mathsf{F} \neq \emptyset$ and $\rank(H) = p < n$. Furthermore, there exists $x^\circ \in \mathsf{F}$ such that $G^\top x^\circ < g$ and $H^\top x^\circ = h$.    
\end{assumption}

We note that the feasibility of \eqref{QP} can be checked via linear programming. Similarly, the other assumptions can easily be satisfied by a simple preprocessing stage. Finally, if $\rank(H) = p = n$, then $\mathsf{F} = \{x^\circ\}$, in which case the feasible region of each of the RLT relaxation and the SDP-RLT relaxation consists of the unique point $(x^\circ, x^\circ \, (x^\circ)^\top)$~\cite[Corollary 28]{QiuY23a}. Therefore, the lower bound from each of the two relaxations already agrees with the optimal value $\nu^*$.  

\subsection{Optimality Conditions}

Since the feasible region of \eqref{QP} is a nonempty polyhedron, constraint qualification holds at each feasible solution. Therefore, if $x \in \R^n$ is a local minimizer of \eqref{QP}, then there exists $(y,z) \in \R^m \times \R^p$ such that the following first-order (KKT) optimality conditions are satisfied: 
\begin{align}
Q \, x + c + G \, y + H \, z & = \mathbf{0} \label{kkt1} \\
G^\top x & \leq g \label{kkt2} \\
H^\top x & = h \label{kkt3} \\
y^\top (g - G^\top x) & = 0 \label{kkt4} \\
y & \geq \mathbf{0}. \label{kkt5}
\end{align}
We say that $x \in \R^n$ is a KKT point (or critical point) of \eqref{QP} if there exists $(y,z) \in \R^m \times \R^p$ satisfying \eqref{kkt1}--\eqref{kkt5}. The set of all KKT points is denoted by
\begin{equation} \label{kktp}
\mathsf{F}^{\mathsf{c}} = \{x \in \R^n: \exists~(y,z) \in \R^m \times \R^p \textrm{ s.t. }\eqref{kkt1}-\eqref{kkt5} \textrm{ hold.}\}  
\end{equation}

If $x \in \R^n$ is a KKT point of \eqref{QP}, then it follows from \eqref{kkt1}, \eqref{kkt3}, and \eqref{kkt4} that (see also \cite{GT73})
\begin{equation} \label{kktobj}
x^\top Q \, x + c^\top x + g^\top y + h^\top z = 0.
\end{equation}

\subsection{Complementarity Formulation}

In this section, we present a formulation of \eqref{QP} by incorporating the first-order optimality conditions. 

By adding the optimality conditions \eqref{kkt1}, \eqref{kkt4}, and \eqref{kkt5} as well as the implied relation \eqref{kktobj} to \eqref{QP}, we obtain the following reformulation with complementarity constraints: 
\[
\begin{array}{llrcl}
\tag{$\mathsf{QP+}$} \label{eqp} &  \nu^*_{\mathsf{+}} = \min\limits_{x \in \R^n, y \in \R^m, z \in \R^p} & \textstyle\frac{1}{2} x^\top Q \, x + c^\top x & & \\
 & \textrm{s.t.} & & & \\
 & & G^\top x & \leq & g\\
 & & H^\top x & = & h \\
 & & Q \, x + c + G \, y + H \, z & = & \mathbf{0} \\
 & & y^\top (g - G^\top x) & = & 0 \\
 & & y & \geq & \mathbf{0} \\
  & & x^\top Q \, x + c^\top x + g^\top y + h^\top z & = & 0.
\end{array}
\]

By using \eqref{kktobj}, it is easy to verify that 
\[
\textstyle\frac{1}{2} x^\top Q \, x + c^\top x = \textstyle\frac{1}{2} \left( c^\top x - g^\top y - h^\top z \right),
\]
which implies that \eqref{eqp} can be formulated as a linear programming problem with complementarity constraints (see also~\cite{GT73}). However, we will continue to use the original objective function in \eqref{eqp}, which will facilitate a direct comparison of the relaxations arising from \eqref{QP} and \eqref{eqp}. 

We review several useful results about the complementarity formulation \eqref{eqp}.

\begin{lemma} \label{eqp_props}
\begin{enumerate}
    \item[(i)] The feasible region of \eqref{eqp} is nonempty if and only if $\mathsf{F}^{\mathsf{c}} \neq \emptyset$, where $\mathsf{F}^{\mathsf{c}}$ is defined as in \eqref{kktp}.
    \item[(ii)] If \eqref{eqp} is not infeasible, then $\nu^*_{\mathsf{+}} > -\infty$ and $\nu^*_{\mathsf{+}}$ is equal to the minimum objective function value of any KKT point of \eqref{QP}.
    \item[(iii)] If $\nu^* > -\infty$, then $\nu^*_{\mathsf{+}} = \nu^*$.
\end{enumerate} 
\end{lemma}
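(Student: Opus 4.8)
The plan is to treat the three parts in order, with the finiteness claim in part (ii) as the crux.

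For part (i), I would first observe that the relation \eqref{kktobj} is redundant: as already recorded in the text, every triple satisfying \eqref{kkt1}, \eqref{kkt3}, and \eqref{kkt4} automatically satisfies \eqref{kktobj}. Consequently the feasible region of \eqref{eqp} coincides with the set of triples $(x,y,z)$ obeying \eqref{kkt1}--\eqref{kkt5}, and its projection onto the $x$-coordinates is precisely $\mathsf{F}^{\mathsf{c}}$ as defined in \eqref{kktp}. The stated equivalence is then immediate, since a feasible triple exists if and only if some $x$ admits a multiplier pair $(y,z)$.

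For part (ii), I would record two structural facts. First, the objective of \eqref{eqp} depends only on $x$ and on the feasible region equals $\mathsf{q}(x)$ for the associated KKT point; combined with part (i) this already yields $\nu^*_{\mathsf{+}} = \inf\{\mathsf{q}(x): x \in \mathsf{F}^{\mathsf{c}}\}$, so the real content is that this infimum is finite and attained. Second, I would use the identity $\mathsf{q}(x) = \frac12(c^\top x - g^\top y - h^\top z)$, valid on the feasible region, to replace the quadratic objective by a genuinely linear functional $\ell(x,y,z)$. The decisive step is to decompose the feasible region by complementarity pattern: for each $S \subseteq \{1,\dots,m\}$ let $P_S$ be the polyhedron on which $y_i = 0$ for $i \notin S$ and $G_i^\top x = g_i$ for $i \in S$, so that the feasible region is the union of the finitely many $P_S$. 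On a fixed $P_S$ I would show that $\ell$ is constant along every recession direction $(\Delta x, \Delta y, \Delta z)$: the inner product of \eqref{kkt1} with $\Delta x$ gives $c^\top \Delta x = -x^\top Q \Delta x$, while the inner product of the recession equation $Q\Delta x + G\Delta y + H\Delta z = \mathbf{0}$ with a base point $x \in P_S$ gives $g^\top \Delta y + h^\top \Delta z = -x^\top Q \Delta x$; in both computations the complementarity pattern forces the cross terms $y^\top G^\top \Delta x$ and $(G^\top x)^\top \Delta y$ to vanish. Subtracting shows that the directional change $\frac12(c^\top \Delta x - g^\top \Delta y - h^\top \Delta z)$ of $\ell$ is zero. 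Hence $\ell$ is bounded below and attains its minimum on each $P_S$, and since there are finitely many pieces, $\nu^*_{\mathsf{+}} > -\infty$ with the infimum attained, which is exactly the minimum objective value over KKT points.

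For part (iii), assuming $\nu^* > -\infty$, I would establish two inequalities. For $\nu^*_{\mathsf{+}} \geq \nu^*$, every KKT point satisfies \eqref{kkt2}--\eqref{kkt3}, hence lies in $\mathsf{F}$, so its objective value is at least $\nu^*$; by part (ii) this bounds $\nu^*_{\mathsf{+}}$ below by $\nu^*$. For $\nu^*_{\mathsf{+}} \leq \nu^*$, I would invoke the Frank--Wolfe theorem: a quadratic function bounded below on a nonempty polyhedron attains its infimum, so there is a global minimizer $x^* \in \mathsf{F}$ with $\mathsf{q}(x^*) = \nu^*$. Since the feasible region is polyhedral, constraint qualification holds at $x^*$, so $x^*$ is a KKT point, and part (ii) gives $\nu^*_{\mathsf{+}} \leq \mathsf{q}(x^*) = \nu^*$. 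Combining the two inequalities proves the equality. I expect the finiteness assertion $\nu^*_{\mathsf{+}} > -\infty$ in part (ii) to be the main obstacle, since the reformulated objective is only linear and the feasible region is nonconvex; the crucial point is that the complementarity pattern annihilates the cross terms, forcing $\ell$ to be constant on each recession cone and thereby ruling out unboundedness even when \eqref{QP} itself is unbounded.
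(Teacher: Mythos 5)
Your proposal is correct, and parts (i) and (iii) follow the paper's proof essentially verbatim: (i) is the same projection observation together with the redundancy of \eqref{kktobj}, and (iii) is the same Frank--Wolfe argument (existence of a minimizer, which is a KKT point since the polyhedral feasible region guarantees constraint qualification, combined with the fact that KKT points lie in $\mathsf{F}$). Where you genuinely diverge is on part (ii), which is indeed the crux: the paper disposes of it in one line by citing \cite[Lemma 3.1]{LuoT92}, whereas you prove it from scratch by decomposing the feasible set of \eqref{eqp} into the finitely many polyhedra $P_S$ indexed by complementarity patterns, passing to the linear objective $\ell(x,y,z)=\tfrac12(c^\top x - g^\top y - h^\top z)$ via \eqref{kktobj}, and showing $\ell$ vanishes on every recession direction of each $P_S$. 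I checked your two inner-product identities and they are correct: pairing \eqref{kkt1} with $\Delta x$ gives $c^\top\Delta x = -x^\top Q\,\Delta x$ (using $H^\top \Delta x = \mathbf{0}$ and $y^\top G^\top \Delta x = 0$ from the pattern), and pairing the recession equation with the base point gives $g^\top\Delta y + h^\top\Delta z = -x^\top Q\,\Delta x$; subtracting yields $\ell$ constant along recession directions, hence a finite attained minimum on each nonempty $P_S$ and, by finiteness of the patterns, overall. This is in effect a self-contained reproof of the cited Luo--Tseng lemma in the affine-complementarity setting. What the paper's citation buys is brevity; what your argument buys is transparency---in particular it makes visible \emph{why} $\nu^*_{\mathsf{+}}$ is finite even when \eqref{QP} itself is unbounded below, which is exactly the phenomenon exploited later in Examples~\ref{ubex2} and~\ref{ubex3}. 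One cosmetic slip: in the second computation the cross term $(G^\top x)^\top \Delta y$ does not vanish---it equals $g^\top \Delta y$; what the complementarity pattern kills is $(g - G^\top x)^\top \Delta y$. Your displayed identity already incorporates this correctly, so only the prose description needs adjusting.
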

\begin{proof}
\begin{enumerate}
    \item[(i)] If $(x,y,z)$ is \eqref{eqp}-feasible, then $x \in \mathsf{F}^{\mathsf{c}}$. The converse implication follows from \eqref{kktp} and the observation that \eqref{kktobj} is implied by the KKT conditions. 
    \item[(ii)] By part (i), $\mathsf{F}^{\mathsf{c}} \neq \emptyset$. The assertion follows from~\cite[Lemma 3.1]{LuoT92}. 
    \item[(iii)] If $\nu^* > -\infty$, then the set of optimal solutions of \eqref{QP} is nonempty~\cite{FW1956}, which implies that $\mathsf{F}^{\mathsf{c}} \neq \emptyset$. The assertion follows from (i) and (ii).
\end{enumerate} 

\end{proof}

\section{RLT Relaxations} \label{Sec3}

In this section, we consider RLT (reformulation-linearization technique) relaxations. First, we present the RLT relaxation of \eqref{QP} and briefly review its optimality conditions. We then consider the RLT relaxation of the complementarity formulation \eqref{eqp}. We finally compare the strengths of the two RLT relaxations under different scenarios.

\subsection{RLT Relaxation and Optimality Conditions}

The RLT relaxation of \eqref{QP} is a linear programming relaxation given by a two-stage approach (see, e.g.,~\cite{Sherali1999}). In the first stage, one generates implied quadratic constraints on $\mathsf{F}$ by treating linear equality and inequality constraints in different ways. The quadratic inequality constraints are obtained by the products of all pairs of linear inequality constraints. On the other hand, the quadratic equality constraints are generated by multiplying each linear equality constraint by each variable. We remark that all other quadratic constraints obtained by the product of two linear equality constraints, or the product of a linear equality constraint and a linear inequality constraint, are already implied by this procedure (see, e.g.,~\cite[Remark 1]{sherali1995reformulation}, \cite{QiuY23a}). In the second stage, all quadratic constraints are linearized by replacing each quadratic term $x_i x_j$ with a new variable $X_{ij},~i = 1,\ldots,n; j = 1,\ldots,n$. 

The RLT relaxation of \eqref{QP} is therefore given by
\[
\begin{array}{llrcl}
\tag{$\mathsf{R}$} \label{rlt} & \nu^*_{\mathsf{R}} = \min\limits_{x \in \R^n, X \in \cS^n} & \textstyle\frac{1}{2}\langle Q, X \rangle + c^\top x & & \\
 & \textrm{s.t.} & & & \\
 & & G^\top x & \leq & g\\[0.2em]
 & & H^\top x & = & h \\[0.2em]
 & & H^\top X & = & h \, x^\top \\[0.2em]
 & & G^\top X G - G^\top x \, g^\top - g \, x^\top G + g \, g^\top & \geq & \mathbf{0}. 
\end{array}
\]

Note that \eqref{rlt} is a relaxation of \eqref{QP} since, for each $x \in \mathsf{F}$, $(x, x \, x^\top)$ is \eqref{rlt}-feasible with the same objective function value, where $\mathsf{F}$ is given by \eqref{def_F}. Therefore, 
\begin{equation} \label{rlt-lb}
    \nu^*_{\mathsf{R}} \leq \nu^*.
\end{equation}

Let $(u,w,R,S) \in \R^m \times \R^p \times \R^{p \times n} \times \cS^m$ denote the dual variables corresponding to the four sets of constraints in \eqref{rlt}, respectively. After scaling $S$ by a factor of $\textstyle\frac{1}{2}$, the dual of \eqref{rlt} is given by (see, e.g.,~\cite{QiuY23a})
\[
\begin{array}{llrcl}
\tag{$\mathsf{RD}$} \label{rltd} & \max\limits_{u \in \R^m, w \in \R^p, R \in \R^{p \times n}, S \in \cS^m} & -u^\top g + w^\top h - \textstyle\frac{1}{2}g^\top S \, g & & \\
 & \textrm{s.t.} & & & \\
 & & -G \, u + H \, w - R^\top h - G \, S \, g & = & c\\[0.2em]
 & & R^\top H^\top + H \, R + G \, S \, G^\top & = & Q \\[0.2em]
 & & S & \geq & \mathbf{0} \\[0.2em]
 & & u & \geq & \mathbf{0}.
\end{array}
\]

Using standard linear programming duality, we arrive at the following optimality conditions~(see, e.g.,~\cite[Lemma 18]{QiuY23a}).

\begin{lemma} \label{opt_cond}
Let $(\hat x, \hat X) \in \R^n \times \cS^n$ be \eqref{rlt}-feasible. Then, $(\hat x, \hat X)$
 is an optimal solution of \eqref{rlt} if and only if there exists 
$(\hat u, \hat w, \hat R, \hat S) \in \R^m \times \R^p \times \R^{p \times n} \times \cS^m$ such that
\begin{align} 
-G \, \hat u + H \, \hat w - \hat R^\top h - G \, \hat S \, g & = c, \label{opt_c} \\
\hat R^\top H^\top + H \, \hat R + G \, \hat S \, G^\top & = Q, \label{opt_Q} \\
\hat u & \geq \mathbf{0}, \label{opt_u} \\
\hat S & \geq \mathbf{0}, \label{opt_S} \\
\hat u^\top \left( g - G^\top \hat x \right) & = 0, \label{opt_cs1} \\
\left\langle \hat S, G^\top \hat X G - G^\top \hat x \, g^\top - g \, \hat x^\top G + g \, g^\top \right\rangle & = 0. \label{opt_cs2}
\end{align}
\end{lemma}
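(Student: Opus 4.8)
The plan is to recognize that \eqref{rlt} is a linear program in the decision variables $(x,X) \in \R^n \times \cS^n$: the objective $\tfrac{1}{2}\langle Q, X\rangle + c^\top x$ is linear in $(x,X)$ since $Q$ is fixed, and each of the four constraint blocks is linear in $(x,X)$ as well. Because $(\hat x, \hat X)$ is assumed to be \eqref{rlt}-feasible, the primal feasible region is nonempty, so I would simply invoke the strong duality theorem together with the complementary slackness characterization of optimality for linear programming. The entire lemma is then a specialization of these standard results to the particular data of \eqref{rlt}.

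First I would associate the dual variables $(u,w,R,S) \in \R^m \times \R^p \times \R^{p \times n} \times \cS^m$ with the four constraint blocks of \eqref{rlt} in order: the inequality $G^\top x \leq g$, the two equalities $H^\top x = h$ and $H^\top X = h\,x^\top$, and the matrix inequality $G^\top X G - G^\top x\,g^\top - g\,x^\top G + g\,g^\top \geq \mathbf{0}$. This recovers the dual \eqref{rltd}, which is already recorded above (see, e.g., \cite[Lemma 18]{QiuY23a}). The sign restrictions $\hat u \geq \mathbf{0}$ and $\hat S \geq \mathbf{0}$, namely \eqref{opt_u} and \eqref{opt_S}, arise because $u$ and $S$ correspond to the two inequality blocks, while $w$ and $R$ are free since they correspond to equality blocks. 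The dual equality constraints \eqref{opt_c} and \eqref{opt_Q} come from collecting, respectively, the coefficients of $x$ and of $X$ in the Lagrangian. Then, by LP strong duality, a feasible $(\hat x, \hat X)$ is optimal if and only if there exists a dual-feasible $(\hat u, \hat w, \hat R, \hat S)$ — one satisfying \eqref{opt_c}, \eqref{opt_Q}, \eqref{opt_u}, and \eqref{opt_S} — attaining the same objective value; by complementary slackness, equality of objectives is equivalent to the vanishing of each inequality constraint's slack paired against its dual variable. The two equality blocks contribute nothing, the inequality $G^\top x \leq g$ yields \eqref{opt_cs1}, and the matrix inequality yields \eqref{opt_cs2}. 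Collecting these produces exactly the stated system \eqref{opt_c}--\eqref{opt_cs2}.

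The argument is essentially bookkeeping once the LP structure is made explicit, and I expect no genuine obstacle, since the lemma is a direct instance of linear programming duality (for the ``only if'' direction, optimality of $(\hat x, \hat X)$ forces a finite optimal value, so the dual is attained with no gap; for the ``if'' direction, weak duality forces optimality). The one point requiring care is the correct formation of the dual, in particular tracking the $\tfrac{1}{2}$ scalings in the objective and in the matrix inner product $\langle \hat S, \cdot \rangle$, together with the symmetrization forced by working over $\cS^n$, so that \eqref{opt_Q} emerges with $Q$ itself (with terms $\hat R^\top H^\top + H\,\hat R$ and $G\,\hat S\,G^\top$) rather than a scalar multiple of it.
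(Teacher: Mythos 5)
Your proposal is correct and matches the paper's approach exactly: the paper derives Lemma~\ref{opt_cond} by forming the dual \eqref{rltd} of the linear program \eqref{rlt} and invoking standard LP strong duality with complementary slackness, citing \cite[Lemma 18]{QiuY23a}. Your attention to the $\tfrac{1}{2}$ scaling of $S$ and the symmetrization producing $\hat R^\top H^\top + H\,\hat R$ in \eqref{opt_Q} mirrors the paper's own remark that the dual is stated after rescaling $S$ by $\tfrac{1}{2}$.
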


\subsection{RLT Relaxation of the Complementarity Formulation}

The RLT relaxation of \eqref{eqp} is given by
\[
\tag{$\mathsf{R+}$} \label{erlt}
\begin{array}{llrcl}
& \nu^*_{\mathsf{R+}} = \min\limits_{\substack{x \in \R^n, y \in \R^m, z \in \R^p, \\ X \in \cS^n, Y \in \cS^m, Z \in \cS^p, \\ M_{xy} \in \R^{n \times m}, M_{xz} \in \R^{n \times p}, M_{yz} \in \R^{m \times p}}} & \textstyle\frac{1}{2}\langle Q, X \rangle + c^\top x & & \\
 & \textrm{s.t.} & & & \\
 & & G^\top x & \leq & g\\[0.2em]
 & & H^\top x & = & h \\[0.2em]
  & & H^\top X & = & h \, x^\top \\[0.2em]
   & & G^\top X G - G^\top x \, g^\top - g \, x^\top G + g \, g^\top & \geq & \mathbf{0} \\[0.2em]
 & & Q \, x + c + G \, y + H \, z & = & \mathbf{0}\\[0.2em]
  & & H^\top M_{xy} & = & h \, y^\top \\[0.2em]
  & & H^\top M_{xz} & = & h \, z^\top \\[0.2em]
 & & Q \, X + c \, x^\top + G \, M_{xy}^\top + H \, M_{xz}^\top & = & \mathbf{0} \\[0.2em]
 & & Q \, M_{xy} + c \, y^\top + G \, Y + H \, M_{yz}^\top & = & \mathbf{0} \\[0.2em]
 & & Q \, M_{xz} + c \, z^\top + G \, M_{yz} + H \, Z  & = & \mathbf{0}\\[0.2em]
 & & \diag \left( y \, g^\top - M_{xy}^\top \, G \right) & = & \mathbf{0}\\[0.25em]
 & & y \, g^\top - M_{xy}^\top \, G & \geq & \mathbf{0} \\[0.2em]
 & & y & \geq & \mathbf{0} \\[0.2em]
 & & Y & \geq & \mathbf{0} \\[0.2em]
 & & \langle Q, X \rangle + c^\top x + g^\top y + h^\top z & = & 0.
\end{array}
\]

In \eqref{erlt}, $X, M_{xy}, M_{xz}, Y, M_{yz}$, and $Z$ are the variables corresponding to the linearizations of the quadratic terms $x \, x^\top$, $x \, y^\top$, $x \, z^\top$, $y \, y^\top$, $y \, z^\top$, and $z \, z^\top$, respectively.  The first four constraints are given by the RLT procedure applied to the constraints from the original formulation. The sixth and seventh constraints are obtained by multiplying the second constraint of \eqref{eqp} by $y^\top$ and $z^\top$, respectively, and the eighth, ninth, and tenth constraints are obtained by multiplying the third constraint of \eqref{eqp} by $x^\top, y^\top$, and $z^\top$, respectively. The eleventh constraint is given by the linearization of the fourth constraint of \eqref{eqp}. We obtain the twelfth constraint by linearizing the product of $G^\top x \leq g$ and $y \geq \mathbf{0}$, and the fourteenth constraint by linearizing the product of $y \geq \mathbf{0}$ with itself. Finally, the last constraint is the linearization of the last quadratic equality in \eqref{eqp}.

\subsection{Comparison of Two RLT Relaxations: Finite RLT Bound}

In this section, under the assumption that $\nu^*_{\mathsf{R}}$ is finite, we compare the strength of the RLT relaxations \eqref{rlt} and \eqref{erlt} obtained from the original formulation \eqref{QP} and the complementarity formulation \eqref{eqp}, respectively.

\begin{theorem} \label{rltcserlt}
Suppose that $\nu^*_{\mathsf{R}} > - \infty$. Then, the RLT relaxations  \eqref{rlt} and \eqref{erlt} are equivalent, i.e., $\nu^*_{\mathsf{R}} = \nu^*_{\mathsf{R+}}$. 
\end{theorem}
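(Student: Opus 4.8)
The plan is to establish the two inequalities separately. The inequality $\nu^*_{\mathsf{R}} \le \nu^*_{\mathsf{R+}}$ is immediate and needs no hypothesis: the first four constraints of \eqref{erlt} are precisely the constraints of \eqref{rlt}, and the two problems share the objective $\tfrac12\langle Q, X\rangle + c^\top x$, so discarding $y, z, Y, Z, M_{xy}, M_{xz}, M_{yz}$ maps any \eqref{erlt}-feasible point to an \eqref{rlt}-feasible point of equal objective value. The real content is the reverse inequality, which I would prove by lifting an optimal solution of \eqref{rlt} to a feasible solution of \eqref{erlt} of the same objective value. Since $\nu^*_{\mathsf{R}}$ is finite and \eqref{rlt} is feasible — by Assumption~\ref{assump1} the point $(x^\circ, x^\circ(x^\circ)^\top)$ is \eqref{rlt}-feasible — the optimum is attained at some $(\hat x, \hat X)$, and Lemma~\ref{opt_cond} supplies multipliers $(\hat u, \hat w, \hat R, \hat S)$ satisfying \eqref{opt_c}--\eqref{opt_cs2}.

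Writing $\hat s = g - G^\top \hat x \ge \mathbf{0}$, the idea is to treat
\[
y(x) = \hat u + \hat S(g - G^\top x), \qquad z(x) = -(\hat w + \hat R\,x)
\]
as affine functions of $x$ and to define all lifted product variables through the linearization operator $\mathcal{L}$ that acts entrywise on matrices of polynomials of degree at most two by sending $x_i x_j \mapsto \hat X_{ij}$ and $x_i \mapsto \hat x_i$. Concretely, I would set $X = \hat X$, $y = y(\hat x)$, $z = z(\hat x)$, $M_{xy} = \mathcal{L}(x\,y(x)^\top)$, $M_{xz} = \mathcal{L}(x\,z(x)^\top)$, $Y = \mathcal{L}(y(x)\,y(x)^\top)$, $Z = \mathcal{L}(z(x)\,z(x)^\top)$, and $M_{yz} = \mathcal{L}(y(x)\,z(x)^\top)$. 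By construction the objective value equals $\tfrac12\langle Q, \hat X\rangle + c^\top \hat x = \nu^*_{\mathsf{R}}$, and because $\mathcal{L}$ is linear it respects every polynomial identity among such expressions; this is what will let me \emph{multiply and linearize} below.

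The engine of the verification is the affine identity
\[
Q\,x + c + G\,y(x) + H\,z(x) = \hat R^\top\!\left(H^\top x - h\right),
\]
which I would obtain by substituting \eqref{opt_c} and \eqref{opt_Q} and simplifying. The first four constraints of \eqref{erlt} hold because $(\hat x, \hat X)$ is \eqref{rlt}-feasible; evaluating the identity at $\hat x$ (where $H^\top \hat x = h$) gives the fifth constraint, and $y = \hat u + \hat S\hat s \ge \mathbf{0}$ follows from the sign conditions $\hat u, \hat S, \hat s \ge \mathbf{0}$ (the thirteenth constraint). Applying $\mathcal{L}$ and using the \eqref{rlt} equalities $H^\top \hat x = h$ and $H^\top \hat X = h\,\hat x^\top$ yields the sixth and seventh constraints, $H^\top M_{xy} = h\,y^\top$ and $H^\top M_{xz} = h\,z^\top$. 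For the eighth, ninth, and tenth constraints I would multiply the affine identity on the right by $x^\top$, $y(x)^\top$, and $z(x)^\top$ and apply $\mathcal{L}$: the left-hand sides become exactly $Q\hat X + c\hat x^\top + G M_{xy}^\top + H M_{xz}^\top$, $Q M_{xy} + c\,y^\top + G Y + H M_{yz}^\top$, and $Q M_{xz} + c\,z^\top + G M_{yz} + H Z$, while the right-hand sides become $\hat R^\top(H^\top\hat X - h\hat x^\top)$, $\hat R^\top(H^\top M_{xy} - h\,y^\top)$, and $\hat R^\top(H^\top M_{xz} - h\,z^\top)$, each of which vanishes by the equalities just established.

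The complementarity and nonnegativity constraints rest on the two complementary-slackness relations. Setting $W = G^\top\hat X G - G^\top\hat x\,g^\top - g\,\hat x^\top G + g\,g^\top \ge \mathbf{0}$ (the left-hand side of the fourth \eqref{rlt} constraint), short computations give
\[
y\,g^\top - M_{xy}^\top G = \hat u\,\hat s^\top + \hat S\,W, \qquad Y = \hat u\,\hat u^\top + \hat u\,\hat s^\top\hat S + \hat S\,\hat s\,\hat u^\top + \hat S\,W\,\hat S,
\]
both componentwise nonnegative (the twelfth and fourteenth constraints). The diagonal condition (the eleventh constraint) follows since \eqref{opt_cs1} forces $\hat u_i\hat s_i = 0$ and \eqref{opt_cs2}, i.e.\ $\langle \hat S, W\rangle = 0$ with $\hat S, W \ge \mathbf{0}$, forces $\hat S_{ij}W_{ij} = 0$ entrywise and hence $\diag(\hat S\,W) = \mathbf{0}$. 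Finally, taking the trace of the eighth constraint gives $\langle Q, \hat X\rangle + c^\top\hat x + \langle G, M_{xy}\rangle + \langle H, M_{xz}\rangle = 0$, and combining $\langle G, M_{xy}\rangle = g^\top y$ (from the diagonal condition) with $\langle H, M_{xz}\rangle = h^\top z$ (from the seventh constraint) yields the last constraint. I expect the principal difficulty to be organizational rather than conceptual: one must pin down $\mathcal{L}$ precisely enough that the multiply-and-linearize step is rigorous, and recognize that the nonzero right-hand side $\hat R^\top(H^\top x - h)$ of the affine identity is annihilated only after lifting, through the RLT equality constraints $H^\top \hat X = h\hat x^\top$, $H^\top M_{xy} = h y^\top$, and $H^\top M_{xz} = h z^\top$ — this is the crux underlying the eighth through tenth constraints.
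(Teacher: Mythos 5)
Your proposal is correct and takes essentially the same route as the paper: both lift the optimal solution of \eqref{rlt} via the multipliers from Lemma~\ref{opt_cond}, and your definitions $M_{xy} = \mathcal{L}(x\,y(x)^\top)$, $Y = \mathcal{L}(y(x)\,y(x)^\top)$, etc., expand to exactly the paper's formulas \eqref{opty}--\eqref{optW}. Your bookkeeping is tidier in places --- the single affine identity $Q\,x + c + G\,y(x) + H\,z(x) = \hat R^\top(H^\top x - h)$ organizing the fifth and the eighth through tenth constraints, the trace shortcut for the final constraint, and the explicit justification that the LP optimum is attained --- but these streamline the paper's term-by-term verification rather than constituting a different argument.
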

\begin{proof}
Clearly, $\nu^*_{\mathsf{R}} \leq \nu^*_{\mathsf{R+}}$ since \eqref{erlt} has additional constraints. 

To establish the reverse inequality, suppose that $\nu^*_{\mathsf{R}} > - \infty$. Let $(\hat x, \hat X) \in \R^n \times \cS^n$ denote an optimal solution of \eqref{rlt}. We will construct
\[
(\hat y, \hat z, \hat Y, \hat Z, \hat M_{xy}, \hat M_{xz}, M_{yz}) \in \R^m  \times \R^p  \times \cS^m  \times \cS^p  \times \R^{n \times m} \times \R^{n \times p} \times \R^{m \times p}
\]
such that $(\hat x, \hat y, \hat z, \hat X, \hat Y, \hat Z, \hat M_{xy}, \hat M_{xz}, \hat M_{yz})$ is \eqref{erlt}-feasible. 

By the optimality conditions for \eqref{rlt}, there exists $(\hat u, \hat w, \hat R, \hat S) \in \R^m \times \R^p \times \R^{p \times n} \times \cS^m$ such that \eqref{opt_c}--\eqref{opt_cs2} hold. 

By \eqref{opt_c} and \eqref{opt_Q}, we obtain
\begin{equation} \label{erltthird}
Q \, \hat x + c + G \left( \hat S \, (g - G^\top \hat x) + \hat u \right) + H \left( - \hat R \, \hat x - \hat w \right) = \mathbf{0}.
\end{equation}

In view of the fifth constraint of \eqref{erlt}, we define
\begin{align} 
\hat y & = \hat S \, (g - G^\top \hat x) + \hat u, \label{opty} \\
\hat z & = - \hat R \, \hat x - \hat w. \label{optz}
\end{align}

We define each of the remaining lifted variables by using the corresponding rank-one matrix, except that we replace $\hat x \, \hat x^\top$ by $\hat X$:
\begin{align} 
\hat Y & = \hat S \left( G^\top \hat X G - G^\top \hat x \, g^\top - g \, \hat x^\top G + g \, g^\top \right) \hat S \nonumber \\ 
 & \quad + \hat S \left( g - G^\top \hat x \right) \hat u^\top + \hat u \left(g - G^\top \hat x \right)^\top \hat S + \hat u \, \hat u^\top, \label{optY} \\
\hat Z & = \hat R \, \hat X \hat R^\top + \hat R \, \hat x \, \hat w^\top + \hat w \, \hat x^\top \hat R^\top + \hat w \, \hat w^\top, \label{optZ} \\
\hat M_{xy} & = \hat x \, g^\top \hat S - \hat X \, G \, \hat S + \hat x \, \hat u^\top, \label{optU} \\
\hat M_{xz} & = - \hat X \, \hat R^\top - \hat x \, \hat w^\top, \label{optV} \\
\hat M_{yz} & = - \hat S \, g \, \hat x^\top \hat R^\top - \hat S \, g \, \hat w^\top + \hat S \, G^\top \hat X \hat R^\top + \hat S \, G^\top \hat x \, \hat w^\top - \hat u \, \hat x^\top \hat R^\top - \hat u \, \hat w^\top. \label{optW} 
\end{align}

We claim that $(\hat x, \hat y, \hat z, \hat X, \hat Y, \hat Z, \hat M_{xy}, \hat M_{xz}, \hat M_{yz})$ is \eqref{erlt}-feasible. The first four constraints of \eqref{erlt} are satisfied since $(\hat x, \hat X)$ is \eqref{rlt}-feasible, and the fifth constraint holds by \eqref{erltthird}, \eqref{opty}, and \eqref{optz}.

Let us first consider the sixth and seventh constraints:
\begin{align*}
H^\top \hat M_{xy} & \stackrel{\eqref{optU}}{=} h \, g^\top \hat S - h \, \hat x^\top \, G \, \hat S + h \, \hat u^\top \stackrel{\eqref{opty}}{=} h \, \hat y^\top, \\
H^\top \hat M_{xz} & \stackrel{\eqref{optV}}{=} - h \, \hat x^\top \hat R^\top - h \, \hat w^\top \stackrel{\eqref{optz}}{=} h \, \hat z^\top, 
\end{align*}
where we used $H^\top \hat x = h$ and $H^\top \hat X = h \, \hat x^\top$.

Considering the eighth constraint, we have 
\begin{align*}
Q \, \hat X & \stackrel{\eqref{opt_Q}}{=} \hat R^\top h \, \hat x^\top + H \hat R \, \hat X + G \, \hat S \, G^\top \hat X, \\
c \, \hat x^\top & \stackrel{\eqref{opt_c}}{=} -G \, \hat u \, \hat x^\top + H \, \hat w \, \hat x^\top - \hat R^\top h \, \hat x^\top - G \, \hat S \, g \, \hat x^\top, \\ 
G \, \hat M_{xy}^\top & \stackrel{\eqref{optU}}{=} G \, \hat S \, g \, \hat x^\top - G \, \hat S \, G^\top \hat X + G \, \hat u \, \hat x^\top , \\ 
H \hat M_{xz}^\top & \stackrel{\eqref{optV}}{=} - H \hat R \, \hat X - H \, \hat w \, \hat x^\top ,   
\end{align*}
where we used $H^\top \hat X = h \, \hat x^\top$ in the first line. 
Therefore,  
$Q \, \hat X + c \, \hat x^\top + G \, \hat M_{xy}^\top + H \, \hat M_{xz}^\top = \mathbf{0}$.

Next, we focus on the ninth constraint:
\begin{align*}
Q \, \hat M_{xy} \stackrel{\eqref{opt_Q}, \eqref{optU}}{=} &
\hat R^\top h \, g^\top \hat S - \hat R^\top h \, \hat x^\top G \, \hat S + \hat R^\top h \, \hat u^\top + H \hat R \, \hat x \, g^\top \hat S - H \hat R \, \hat X G \, \hat S  \\ 
 & \quad  + \, H \hat R \, \hat x \, \hat u^\top + G \, \hat S \, G^\top \hat x \, g^\top \hat S - G \, \hat S \, G^\top \hat X G \, \hat S + G \, \hat S \, G^\top \hat x \, \hat u^\top, \\
c \, \hat y^\top  \stackrel{\eqref{opt_c}, \eqref{opty}}{=} & - G \, \hat u \, g^\top \hat S + G \, \hat u \, \hat x^\top G \, \hat S - G \, \hat u \, \hat u^\top    + H \, \hat w \, g^\top \hat S - H \, \hat w \, \hat x^\top G \, \hat S \\ 
 & \quad + H \, \hat w \, \hat u^\top 
 - \, \hat R^\top h \, g^\top \hat S + \hat R^\top h \, \hat x^\top G \, \hat S - \hat R^\top h \, \hat u^\top - G \, \hat S \, g \, g^\top \hat S \\
 & \quad + G \, \hat S \, g  \, \hat x^\top G \, \hat S - G \, \hat S \, g \, \hat u^\top , \\
G \, \hat Y \stackrel{\eqref{optY}}{=} & G \, \hat S \, G^\top \hat X \, G \, \hat S - G \, \hat S \, G^\top \hat x \, g^\top \hat S - G \, \hat S \, g \, \hat x^\top G \, \hat S + G \, \hat S \, g \, g^\top \hat S   \\ 
 & \quad + \, G \, \hat S \, g \, \hat u^\top - G \, \hat S \, G^\top \hat x \, \hat u^\top + G \, \hat u \, g^\top \hat S -  G \, \hat u \, \hat x^\top G \, \hat S  + G \, \hat u \, \hat u^\top, \\
H \hat M_{yz}^\top  \stackrel{\eqref{optW}}{=} & - H \hat R \, \hat x \, g^\top \hat S - H \, \hat w \, g^\top \hat S 
+ H \hat R \, \hat X \, G \, \hat S \\
 & \quad + H \, \hat w \, \hat x^\top G \, \hat S - H \hat R \, \hat x \, \hat u^\top - H \, \hat w \, \hat u^\top,
\end{align*}
where we used $H^\top \hat x = h$ and $H^\top \hat X = h \, \hat x^\top$ in the first equality. 
It follows that 
$Q \, \hat M_{xy} + c \, \hat y^\top + G \, \hat Y + H \hat M_{yz}^\top = \mathbf{0}$.

Considering the tenth constraint, we obtain
\begin{align*}
Q \, \hat M_{xz}  \stackrel{\eqref{opt_Q}, \eqref{optV}}{=} & - \hat R^\top h \, \hat x^\top \hat R^\top - \hat R^\top h \, \hat w^\top - H \hat R \, \hat X \hat R^\top - H \hat R \, \hat x \, \hat w^\top \\ 
 & \quad - \, G \, \hat S \, G^\top \hat X \hat R^\top - G \, \hat S \, G^\top \hat x \, \hat w^\top, \\
c \, \hat z^\top \stackrel{\eqref{opt_c},\eqref{optz}}{=} & G \, \hat u \, \hat x^\top \hat R^\top + G \, \hat u \, \hat w^\top - H \, \hat w \, \hat x^\top \hat R^\top - H \, \hat w \, \hat w^\top \\
 &  \quad + \, \hat R^\top h \, \hat x^\top \hat R^\top + \hat R^\top h \, \hat w^\top + G \, \hat S \, g \, \hat x^\top \hat R^\top + G \, \hat S \, g \, \hat w^\top, \\
G \, \hat M_{yz} \stackrel{\eqref{optW}}{=} & - G \, \hat S \, g \, \hat x^\top \hat R^\top - G \, \hat S \, g \, \hat w^\top 
+ G \, \hat S \, G^\top \hat X \hat R^\top + G \, \hat S \, G^\top \hat x \, \hat w^\top \\ 
 & \quad - \, G \, \hat u \, \hat x^\top \hat R^\top  - G \, \hat u \, \hat w^\top , \\ 
H \hat Z \stackrel{\eqref{optZ}}{=} & H \hat R \, \hat X \hat R^\top + H \hat R \, \hat x \, \hat w^\top + H \, \hat w \, \hat x^\top \hat R^\top + H \, \hat w \, \hat w^\top,    
\end{align*}
where we used $H^\top \hat x = h$ and $H^\top \hat X = h \, \hat x^\top$ in the first equality. We conclude that 
$Q \, \hat M_{xz} + c \, \hat z^\top + G \, \hat M_{yz} + H \hat Z = \mathbf{0}$.

We next focus on the eleventh and twelfth constraints:
\begin{eqnarray*}
\hat y \, g^\top - \hat M_{xy}^\top \, G & \stackrel{\eqref{opty},\eqref{optU}}{=} & \hat S \, g \, g^\top - \hat S \, G^\top \hat x \, g^\top + \hat u \, g^\top - \hat S \, g \, \hat x^\top G + \hat S \, G^\top \hat X G - \hat u \, \hat x^\top G  \\
  & = & \hat u \left( g - G^\top \hat x \right)^\top + \hat S \left( G^\top \hat X G - G^\top \hat x \, g^\top - g \, \hat x^\top G + g \, g^\top \right).
\end{eqnarray*}
By \eqref{opt_u}, the first constraint, \eqref{opt_S}, and the fourth constraint, we conclude that $\hat y \, g^\top - \hat M_{xy}^\top \, G \geq \mathbf{0}$. By \eqref{opt_S}, \eqref{opt_cs1},  \eqref{opt_cs2}, and the fourth constraint, we obtain $\diag \left( \hat y \, g^\top - \hat M_{xy}^\top \, G \right) = \mathbf{0}$. 

Next, it follows from \eqref{opty}, \eqref{optY}, \eqref{opt_u}, the first constraint, \eqref{opt_S}, and the fourth constraint that $\hat y \geq \mathbf{0}$ and $\hat Y \geq \mathbf{0}$. Finally, considering the last constraint, we arrive at 
\begin{align*}
    \langle Q, \hat X \rangle & \stackrel{\eqref{opt_Q}}{=} 2 \, h^\top \hat R \, \hat x + \left\langle \hat S, G^\top \hat X G \right\rangle, \\
    c^\top \hat x & \stackrel{\eqref{opt_c}}{=} - \hat u^\top G^\top \hat x + \hat w^\top h - h^\top \hat R \, \hat x - g^\top \hat S \, G^\top \hat x, \\
    g^\top \hat y & \stackrel{\eqref{opty}}{=} g^\top \hat S \, g - g^\top \hat S \, G^\top \hat x + g^\top \hat u, \\
    h^\top \hat z & \stackrel{\eqref{optz}}{=} -h^\top \hat R \, \hat x - h^\top \hat w,
\end{align*}
where we used $H^\top \hat X = h \, \hat x^\top$ in the first equality. 
It follows that
\begin{eqnarray*}
\langle Q, \hat X \rangle + c^\top \hat x + g^\top \hat y + h^\top \hat z & = & \hat u^\top \left( g - G^\top \hat x \right) \\ 
 & & \quad + 
\left\langle \hat S, G^\top \hat X G - G^\top \hat x \, g^\top - g \, \hat x^\top G + g \, g^\top \right\rangle \\
 & = & 0,   
\end{eqnarray*}
where we used \eqref{opt_cs1} and \eqref{opt_cs2} to derive the second equality. 

Therefore, $(\hat x, \hat y, \hat z, \hat X, \hat Y, \hat Z, \hat M_{xy}, \hat M_{xz}, \hat M_{yz})$ is \eqref{erlt}-feasible with the same objective function value, which implies that $\nu^*_{\mathsf{R+}} \leq \nu^*_{\mathsf{R}}$. We conclude that $\nu^*_{\mathsf{R+}} =  \nu^*_{\mathsf{R}}$.

\end{proof}

Theorem~\ref{rltcserlt} establishes the equivalence of the two RLT relaxations \eqref{rlt} and \eqref{erlt} under the assumption that $\nu^*_{\mathsf{R}}$ is finite. The following corollary presents a sufficient condition for a finite RLT lower bound $\nu^*_{\mathsf{R}}$.

\begin{corollary} \label{polytope_imp}
If $\mathsf{F}$ is nonempty and bounded, then $\nu^*_{\mathsf{R}} = \nu^*_{\mathsf{R+}}$.    
\end{corollary}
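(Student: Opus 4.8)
The plan is to reduce the corollary to Theorem~\ref{rltcserlt} by verifying that its hypothesis, $\nu^*_{\mathsf{R}} > -\infty$, holds automatically when $\mathsf{F}$ is nonempty and bounded. Since any $x \in \mathsf{F}$ yields an \eqref{rlt}-feasible point $(x, x\,x^\top)$, the feasible region of \eqref{rlt} is a nonempty polyhedron, so it suffices to show that this region is \emph{bounded}: a linear objective over a nonempty polytope attains a finite minimum, giving $\nu^*_{\mathsf{R}} > -\infty$, after which Theorem~\ref{rltcserlt} applies directly to conclude $\nu^*_{\mathsf{R}} = \nu^*_{\mathsf{R+}}$.

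To establish boundedness I would examine the recession cone of the feasible set of \eqref{rlt}. A recession direction $(d_x, d_X) \in \R^n \times \cS^n$ satisfies the homogenized constraints $G^\top d_x \leq \mathbf{0}$, $H^\top d_x = \mathbf{0}$, $H^\top d_X = h\, d_x^\top$, and $G^\top d_X G - G^\top d_x\, g^\top - g\, d_x^\top G \geq \mathbf{0}$. The first two conditions say that $d_x$ lies in the recession cone of $\mathsf{F}$, which is trivial because $\mathsf{F}$ is bounded; hence $d_x = \mathbf{0}$, and the remaining conditions collapse to $H^\top d_X = \mathbf{0}$ and $G^\top d_X G \geq \mathbf{0}$. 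The crux is therefore to show that these two conditions force $d_X = \mathbf{0}$.

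For this step, set $\mathsf{L} = \{d \in \R^n : H^\top d = \mathbf{0}\}$ and let $b_i$ be the orthogonal projection onto $\mathsf{L}$ of the $i$-th column of $G$. From $H^\top d_X = \mathbf{0}$ and symmetry of $d_X$ one gets that $d_X$ annihilates the column space of $H$ and that $\mathrm{range}(d_X) \subseteq \mathsf{L}$, so $a_i^\top d_X a_j = b_i^\top d_X b_j \geq 0$ for all $i,j$. Rephrasing boundedness of $\mathsf{F}$ on $\mathsf{L}$ shows that the only $d \in \mathsf{L}$ with $b_i^\top d \leq 0$ for every $i$ is $d = \mathbf{0}$; equivalently the finitely generated (hence closed) cone $\mathrm{cone}\{b_i\}$ has trivial polar in $\mathsf{L}$, so $\mathrm{cone}\{b_i\} = \mathsf{L}$. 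Then $v^\top d_X w \geq 0$ for all $v,w \in \mathrm{cone}\{b_i\} = \mathsf{L}$, and replacing $w$ by $-w \in \mathsf{L}$ gives $v^\top d_X w = 0$ for all $v,w \in \mathsf{L}$, whence $d_X = \mathbf{0}$. This makes the recession cone $\{\mathbf{0}\}$, so the feasible set of \eqref{rlt} is a nonempty polytope and $\nu^*_{\mathsf{R}} > -\infty$.

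I expect the main obstacle to be precisely the implication ``$H^\top d_X = \mathbf{0}$ and $G^\top d_X G \geq \mathbf{0}$ force $d_X = \mathbf{0}$,'' since the one-sided inequalities $b_i^\top d_X b_j \geq 0$ are on their own far too weak. The key realization is that boundedness of $\mathsf{F}$ forces the projected columns of $G$ to positively span $\mathsf{L}$, which upgrades these inequalities to two-sided equalities via the sign-flip $w \mapsto -w$; everything else is a routine recession-cone computation.
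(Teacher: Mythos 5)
Your proof is correct, and its skeleton matches the paper's: reduce to Theorem~\ref{rltcserlt} by showing the feasible region of \eqref{rlt} is a nonempty polytope, hence $\nu^*_{\mathsf{R}} > -\infty$. The difference is in the middle step. The paper simply cites \cite[Lemma 7]{QiuY23a} for the fact that boundedness of $\mathsf{F}$ implies boundedness of the RLT feasible region, whereas you reprove this fact from scratch via the recession cone. Your argument is sound throughout: the recession directions of \eqref{rlt} do satisfy the homogenized system you wrote; boundedness of $\mathsf{F}$ kills $d_x$; $H^\top d_X = \mathbf{0}$ together with symmetry makes $d_X$ vanish on $\mathsf{L}^\perp = \mathrm{range}(H)$ and map into $\mathsf{L}$, so only the projected columns $b_i$ matter; triviality of $\{d \in \mathsf{L}: b_i^\top d \leq 0 \ \forall i\}$ (the recession cone of $\mathsf{F}$, rewritten inside $\mathsf{L}$) plus the bipolar theorem for the closed finitely generated cone $\mathrm{cone}\{b_i\}$ gives $\mathrm{cone}\{b_i\} = \mathsf{L}$; and the sign-flip $w \mapsto -w$ upgrades $b_i^\top d_X b_j \geq 0$ to $v^\top d_X w = 0$ on all of $\mathsf{L}$, which together with annihilation of $\mathsf{L}^\perp$ forces $d_X = \mathbf{0}$. (Note the degenerate case $m = 0$ cannot arise under the hypothesis, since Assumption~\ref{assump1} gives $p < n$ and then $\mathsf{F}$ would be an unbounded affine set.) What each approach buys: the paper's two-line proof is economical but opaque, resting entirely on an external lemma; yours is self-contained and exposes the geometric mechanism --- boundedness of $\mathsf{F}$ forces the projected constraint normals to positively span the null space of $H^\top$, which is exactly what turns the one-sided RLT inequalities into two-sided equalities. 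In effect you have reproved the cited lemma, which is a perfectly legitimate (if longer) route to the corollary.
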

\begin{proof}
If $\mathsf{F}$ is nonempty and bounded, then the feasible region of \eqref{rlt} is nonempty and bounded~\cite[Lemma 7]{QiuY23a}, which implies that $\nu^*_{\mathsf{R}}$ is finite. The assertion follows from Theorem~\ref{rltcserlt}.

\end{proof}

We close this section with an example that illustrates a family of instances of \eqref{QP} with an unbounded feasible region but a finite RLT lower bound $\nu^*_{\mathsf{R}}$.

\begin{example}
\label{ubex0}
Consider the following family of instances of \eqref{QP} with $n = 2$, $m = 2$, and $p = 0$ parametrized by $\alpha \in \R$, adapted from \cite[Example 4]{QiuY23a}:
\[
Q = \begin{bmatrix}1 & 1 \\ 1 & 1 \end{bmatrix}, \quad c(\alpha) = - \begin{bmatrix} \alpha \\ \alpha \end{bmatrix}, \quad G = \begin{bmatrix} 1 & -1 \\ 1 & -1 \end{bmatrix}, \quad g = \begin{bmatrix} 2 \\ 2 \end{bmatrix}.
\]
The feasible region $\mathsf{F} = \{x \in \R^2: -2 \leq x_1 + x_2 \leq 2\}$ is clearly unbounded and the objective function is convex. It is easy to verify that the objective function can be expressed as $\mathsf{q}(x; {\alpha}) = \textstyle\frac{1}{2}\left( x_1 + x_2 - \alpha \right)^2 - \textstyle\frac{1}{2} \alpha^2$. Therefore, the set of KKT points (and hence the set of minimizers), denoted by $\mathsf{F}^{\mathsf{c}}(\alpha)$, is given by
\[
\mathsf{F}^{\mathsf{c}}(\alpha) = \left\{ \begin{array}{ll} \left\{x \in \R^2: x_1 + x_2 = -2 \right\}, & \alpha \in (-\infty, -2], \\[0.25em]
\left\{x \in \R^2: x_1 + x_2 = \alpha \right\}, & \alpha \in (-2, 2), \\[0.25em]
\left\{x \in \R^2: x_1 + x_2 = 2 \right\}, & \alpha \in [2, \infty), \\
\end{array}
\right.
\]
which implies that the optimal value, denoted by $\nu^*(\alpha)$, is given by 
\[
\nu^*(\alpha) = \left\{ \begin{array}{ll} 2 \alpha + 2, & \alpha \in (-\infty, -2], \\[0.25em]
- \textstyle\frac{1}{2} \alpha^2, & \alpha \in (-2, 2), \\[0.25em]
-2 \alpha + 2, & \alpha \in [2, \infty). \\
\end{array}
\right.
\] 
It is easy to verify that an optimal solution of the RLT relaxation \eqref{rlt}, denoted by $(\hat x (\alpha), \hat X (\alpha))$, is given by 
\[
\left( \hat x (\alpha), \hat X (\alpha) \right) = \left\{\begin{array}{ll} \left( \begin{bmatrix} 0 \\ -2 \end{bmatrix}, \begin{bmatrix} 0 & 2 \\ 2 & 0 \end{bmatrix} \right), & \alpha \in (-\infty, -2], \\[1em]
\left( \begin{bmatrix} 0 \\ 0 \end{bmatrix}, \begin{bmatrix} 0 & -2 \\ -2 & 0 \end{bmatrix} \right) , & \alpha \in (-2, 2), \\[1em]
\left( \begin{bmatrix} 0 \\ 2 \end{bmatrix}, \begin{bmatrix} 0 & 2 \\ 2 & 0 \end{bmatrix} \right), & \alpha \in [2, \infty), \\
\end{array}
\right.
\]
which implies that the optimal value of \eqref{rlt}, denoted by $\nu^*_{\mathsf{R}}(\alpha)$ is given by
\[
\nu^*_{\mathsf{R}}(\alpha) = \left\{ \begin{array}{ll} 2 \alpha + 2, & \alpha \in (-\infty, -2], \\
-2, & \alpha \in (-2, 2), \\
-2 \alpha + 2, & \alpha \in [2, \infty). \\
\end{array}
\right.
\]
Therefore, we obtain $\nu^*_{\mathsf{R}}(\alpha) = \nu^* (\alpha)$ for each $\alpha \in (-\infty,2] \cup [2,\infty)$, whereas $\nu^*_{\mathsf{R}}(\alpha) < \nu^* (\alpha)$ for each $\alpha \in (-2, 2)$. 
Since $\nu^*_{\mathsf{R}}(\alpha) > -\infty$ for each $\alpha \in \R$, 
it follows from Theorem~\ref{rltcserlt} that $\nu^*_{\mathsf{R}}(\alpha) = \nu^*_{\mathsf{R+}}(\alpha)$ for each $\alpha \in \R$. As illustrated by this example, relaxations of the first-order optimality conditions do not improve the RLT bound even though \eqref{QP} is a convex optimization problem. 
\end{example}

\subsection{Comparison of Two RLT Relaxations: Unbounded Case}

By Theorem~\ref{rltcserlt}, the RLT relaxation \eqref{erlt} is equivalent to \eqref{rlt} whenever $\nu^*_{\mathsf{R}} > -\infty$. In this section, we consider the behavior of \eqref{erlt} on instances of \eqref{QP} that admit an unbounded RLT relaxation. We present three examples that illustrate different scenarios.

\begin{example} \label{ubexparam1}
Consider the following family of instances of \eqref{QP} with $n = 2$, $m = 2$, and $p = 0$ parametrized by $\alpha \in \R$:
\[
Q = \begin{bmatrix}1 & 0 \\ 0 & 1 \end{bmatrix}, \quad c(\alpha) = -   \begin{bmatrix} \alpha \\ \alpha \end{bmatrix}, \quad G = \begin{bmatrix} 1 & -1 \\ 1 & -1 \end{bmatrix}, \quad g = \begin{bmatrix} 2 \\ 2 \end{bmatrix}.
\]
The feasible region $\mathsf{F} = \{x \in \R^2: -2 \leq x_1 + x_ 2 \leq 2\}$ is clearly unbounded and the objective function is strictly convex. It is easy to verify that the unique KKT point (and hence the unique minimizer) is given by
\[
\mathsf{F}^{\mathsf{c}}(\alpha) = \left\{  \begin{array}{ll} \left\{ \begin{bmatrix} -1 & -1 \end{bmatrix}^\top \right\}, & \alpha \in (-\infty, -1], \\[0.25em]
\left\{ \begin{bmatrix} \alpha & \alpha \end{bmatrix}^\top \right\}, & \alpha \in (-1, 1), \\[0.25em]
\left\{ \begin{bmatrix} 1 & 1 \end{bmatrix}^\top \right\}, & \alpha \in [1, \infty), \\
\end{array}
\right.
\]
which implies that 
\[
\nu^*(\alpha) = \left\{ \begin{array}{ll} 2 \alpha + 1, & \alpha \in (-\infty, -1], \\[0.25em]
-\alpha^2, & \alpha \in (-1, 1), \\[0.25em]
-2 \alpha + 1, & \alpha \in [1, \infty). \\
\end{array}
\right.
\]
For each $\alpha \in \R$, we claim that the RLT relaxation \eqref{rlt} is unbounded below. Consider the dual problem given by \eqref{rltd}. Since $p = 0$, the second constraint implies that 
\[
Q = G \, S \, G^\top = \left( S_{11} - 2 S_{12} + S_{22} \right) \begin{bmatrix} 1 & 1 \\ 1 & 1 \end{bmatrix},
\]
where $S \in \cS^2$. It follows that \eqref{rltd} is infeasible, which implies that \eqref{rlt} is unbounded below since $\mathsf{F} \neq \emptyset$. Therefore, $\nu^*_{\mathsf{R}}(\alpha) = -\infty$ for each $\alpha \in \R$. 

Let us now consider \eqref{erlt}. We remark that both the objective function and the constraints of \eqref{erlt} depend on $\alpha$. Our computational results illustrate that the $(x, X)$-component of an optimal solution of \eqref{erlt} is given by
\[
\left( x^*(\alpha), X^*(\alpha) \right) = \left\{\begin{array}{ll} \left( \begin{bmatrix} -1 \\ -1 \end{bmatrix}, \begin{bmatrix} 1 & 1 \\ 1 & 1 \end{bmatrix} \right), & \alpha \in (-\infty, -1], \\[1em]
\left( \begin{bmatrix} 0 \\ 0 \end{bmatrix}, \begin{bmatrix} -1 & -1 \\ -1 & -1 \end{bmatrix} \right) , & \alpha \in (-1, 1), \\[1em]
\left( \begin{bmatrix} 1 \\ 1 \end{bmatrix}, \begin{bmatrix} 1 & 1 \\ 1 & 1 \end{bmatrix} \right), & \alpha \in [1, \infty), \\
\end{array}
\right.
\]
which implies that
\[
\nu_{\mathsf{R+}}^*(\alpha) = \left\{ \begin{array}{ll} 2 \alpha + 1, & \alpha \in (-\infty, -1], \\[0.25em]
-1, & \alpha \in (-1, 1), \\[0.25em]
-2 \alpha + 1, & \alpha \in [1, \infty). \\
\end{array}
\right.
\]
It follows that $\nu_{\mathsf{R+}}^*(\alpha) = \nu^*(\alpha)$ for each $\alpha \in (-\infty, -1] \cup [1, \infty)$, whereas $\nu_{\mathsf{R+}}^*(\alpha) < \nu^*(\alpha)$ for each $\alpha \in (-1,1)$.
\end{example}

In Example~\ref{ubexparam1}, \eqref{erlt} not only strengthens \eqref{rlt}, but also fully closes the gap for certain choices of the parameter $\alpha$. On the other hand, one can still have $\nu_{\mathsf{R+}} < \nu^*$, i.e., the relaxation of optimality conditions does not guarantee exactness even if the objective function of \eqref{QP} is strictly convex.   

\begin{example} \label{ubex2}
Consider the following family of instances of \eqref{QP} with $n = 2$, $m = 2$, and $p = 0$ parametrized by $\alpha \in \R$:  
\[
Q = - \begin{bmatrix}1 & 0 \\ 0 & 1 \end{bmatrix}, \quad c(\alpha) =    \begin{bmatrix} \alpha \\ \alpha \end{bmatrix}, \quad G = \begin{bmatrix} 1 & -1 \\ 1 & -1 \end{bmatrix}, \quad g = \begin{bmatrix} 2 \\ 2 \end{bmatrix}.
\]
Note that this family of instances is obtained by negating the objective function of the family of instances in Example~\ref{ubexparam1}. The objective function is strictly concave and is unbounded along the direction $d = \begin{bmatrix} 1 & -1 \end{bmatrix}^\top$. Therefore, $\nu^*(\alpha) = -\infty$ for each $\alpha \in \R$. By \eqref{rlt-lb}, $\nu_{\mathsf{R}}(\alpha) = -\infty$ for each $\alpha \in \R$.

It is easy to verify that the set of KKT points is given by
\[
\mathsf{F}^{\mathsf{c}}(\alpha) = \left\{ \begin{array}{ll} \left\{ \begin{bmatrix} 1 & 1 \end{bmatrix}^\top \right\}, & \alpha \in (-\infty, -1), \\[0.25em]
\left\{ \begin{bmatrix} -1 & -1 \end{bmatrix}^\top, \begin{bmatrix} \alpha & \alpha \end{bmatrix}^\top, \begin{bmatrix} 1 & 1 \end{bmatrix}^\top \right\}, & \alpha \in [-1, 1], \\[0.25em]
\left\{ \begin{bmatrix} -1 & -1 \end{bmatrix}^\top \right\}, & \alpha \in [1, \infty), \\
\end{array}
\right.
\]

Concerning \eqref{erlt}, it follows from our computational results that the $(x, X)$-component of an optimal solution of \eqref{erlt} is given by
\[
\left( x^*(\alpha), X^*(\alpha) \right) = \left\{\begin{array}{ll} \left( \begin{bmatrix} 1 \\ 1 \end{bmatrix}, \begin{bmatrix} 1 & 1 \\ 1 & 1 \end{bmatrix} \right), & \alpha \in (-\infty, 0), \\[1em]
\left( \begin{bmatrix} 1 \\ 1 \end{bmatrix}, \begin{bmatrix} 1 & 1 \\ 1 & 1 \end{bmatrix} \right) , & \alpha = 0, \\[1em]
\left( \begin{bmatrix} -1 \\ -1 \end{bmatrix}, \begin{bmatrix} 1 & 1 \\ 1 & 1 \end{bmatrix} \right), & \alpha \in (0, \infty). \\
\end{array}
\right.
\]
Therefore,
\[
\nu_{\mathsf{R+}}^*(\alpha) = \left\{ \begin{array}{ll} 2 \alpha - 1, & \alpha \in (-\infty, 0), \\[0.25em]
-1, & \alpha = 0, \\[0.25em]
-2 \alpha - 1, & \alpha \in (0, \infty). \\
\end{array}
\right.
\]
It follows that $\nu_{\mathsf{R+}}^*(\alpha) > -\infty = \nu^*$ for each $\alpha \in \R$. 
Therefore, \eqref{erlt} is no longer a valid relaxation of \eqref{QP}.
\end{example}

The next example illustrates another possible scenario.

\begin{example} \label{ubex3}
Consider the following family of instances of \eqref{QP} with $n = 2$, $m = 2$, and $p = 0$:  
\[
Q = - \begin{bmatrix}1 & 0 \\ 0 & 1 \end{bmatrix}, \quad c(\alpha) =    \begin{bmatrix} -\alpha \\ -1 + \alpha \end{bmatrix}, \quad G = - \begin{bmatrix} 1 & 0 \\ 0 & 1 \end{bmatrix}, \quad g = \begin{bmatrix} 0 \\ 0 \end{bmatrix}.
\]
The objective function is strictly concave and is unbounded along the direction $d = \begin{bmatrix} 1 & 1 \end{bmatrix}^\top$. Therefore, $\nu^*(\alpha) = -\infty$ for each $\alpha \in \R$. By \eqref{rlt-lb}, $\nu_{\mathsf{R}}^*(\alpha) = -\infty$ for each $\alpha \in \R$.

It is easy to verify that the set of KKT points is empty, i.e., $\mathsf{F}^{\mathsf{c}}(\alpha) = \emptyset$ for each $\alpha \in \R$ since \eqref{kkt1} and \eqref{kkt5} are inconsistent. It follows that \eqref{erlt} is infeasible, i.e., $\nu_{\mathsf{R+}}^*(\alpha) = +\infty > -\infty = \nu^*$ for each $\alpha \in \R$. 
Once again, \eqref{erlt} is no longer a valid relaxation of \eqref{QP}.
\end{example}

We close this section with a full comparison of the RLT relaxations \eqref{rlt} and \eqref{erlt}.

\begin{enumerate}
    \item[(i)] If \eqref{QP} has a finite optimal value, then \eqref{eqp} is an equivalent reformulation of \eqref{QP} by Lemma~\ref{eqp_props}(iii). 
    \begin{itemize}
        \item[-] If $\nu^*_{\mathsf{R}} > -\infty$, then \eqref{rlt} and \eqref{erlt} are equivalent and $\nu^*_{\mathsf{R}} = \nu^*_{\mathsf{R+}}$ by Theorem~\ref{rltcserlt} (see, e.g., Example~\ref{ubex0}).

        \item[-] If, on the other hand, $\nu^*_{\mathsf{R}} = -\infty$, then $\nu^*_{\mathsf{R}} \leq \nu^*_{\mathsf{R+}} \leq \nu^*$, and each of the two inequalities can be strict (see, e.g.,  Example~\ref{ubexparam1}). In this case, \eqref{erlt} can potentially yield a tighter lower bound than \eqref{rlt}.
    \end{itemize}  

    \item[(ii)] If \eqref{QP} is unbounded below, then $\nu^*_{\mathsf{R}} = -\infty$ by \eqref{rlt-lb}. 
    \begin{itemize}
        \item[-] If $\mathsf{F}^{\mathsf{c}} = \emptyset$, then \eqref{eqp} is infeasible by Lemma~\ref{eqp_props}(i). Therefore, \eqref{eqp} is no longer equivalent to \eqref{QP}, and \eqref{erlt} is not necessarily a valid relaxation of \eqref{QP} (see, e.g., Example~\ref{ubex3}). 
        \item[-] If, on the other hand, $\mathsf{F}^{\mathsf{c}} \neq \emptyset$, then $\nu^*_{\mathsf{R+}} > -\infty$ by Lemma~\ref{eqp_props}(ii). Therefore, once again, \eqref{eqp} is no longer equivalent to \eqref{QP}, and \eqref{erlt} is not necessarily a valid relaxation of \eqref{QP} (see, e.g., Example~\ref{ubex2}).
        \end{itemize}
\end{enumerate}

In Examples~\ref{ubex2} and~\ref{ubex3}, it is easy to verify that $-\infty = \nu^*(\alpha) < \nu^*_{\mathsf{+}}(\alpha) = \nu^*_{\mathsf{R+}}(\alpha)$ for each $\alpha \in \R$. Therefore, for each family of instances, \eqref{erlt} is, in fact, an exact relaxation of \eqref{eqp}. However, \eqref{eqp} is not equivalent to \eqref{QP} in each of these examples. Therefore, some care needs to be taken before incorporating relaxations of optimality conditions into RLT relaxations of general quadratic programs. We refer the reader to~\cite{HuMP12} for a two-stage approach to obtain equivalent linear complementarity formulations of \eqref{QP}.

\section{SDP-RLT Relaxation} \label{Sec4}

In this section, we consider the SDP-RLT relaxation of \eqref{QP} given by combining the Shor relaxation with the RLT relaxation. After presenting the SDP-RLT relaxation, we show that the Slater condition fails whenever $p > 0$. We then present another formulation using a facial reduction approach, for which the Slater condition always holds. Using the equivalence between the two formulations, we establish optimality conditions for the original SDP-RLT relaxation. Finally, we present our main result on the effect of incorporating optimality conditions on the strength of the SDP-RLT relaxation. 

The SDP-RLT relaxation of \eqref{QP} is obtained by further strengthening the relation between the original variable $x$ and the lifted variable $X$ in the RLT relaxation using an additional semidefinite constraint:
\[
\begin{array}{llrcl}
\tag{$\mathsf{SR}$} \label{srlt} & \nu^*_{\mathsf{SR}} = \min\limits_{x \in \R^n, X \in \cS^n} & \textstyle\frac{1}{2}\langle Q, X \rangle + c^\top x & & \\
 & \textrm{s.t.} & & & \\
 & & G^\top x & \leq & g\\[0.2em]
 & & H^\top x & = & h \\[0.2em]
 & & H^\top X & = & h \, x^\top \\[0.2em]
 & & G^\top X G - G^\top x \, g^\top - g \, x^\top G + g \, g^\top & \geq & \mathbf{0}, \\[0.2em]
 & & X - x \, x^\top & \succeq & \mathbf{0}.
\end{array}
\]

By the Schur complement property,
\begin{equation} \label{Schurcomp}
X - x \, x^\top \succeq \mathbf{0} \Longleftrightarrow \begin{bmatrix}
   1 & x^\top \\
   x & X
\end{bmatrix} \succeq \mathbf{0}.    
\end{equation}
Therefore, \eqref{srlt} can be formulated as a semidefinite programming problem. 

Similar to the RLT relaxation \eqref{rlt}, for each $x \in \mathsf{F}$, $(x, x \, x^\top)$ is \eqref{srlt}-feasible with the same objective function value, where $\mathsf{F}$ is given by \eqref{def_F}. Therefore, we immediately obtain
\begin{equation} \label{srltrlt-lb}
    \nu^*_{\mathsf{R}} \leq \nu^*_{\mathsf{SR}} \leq \nu^*.
\end{equation}

\subsection{Slater Condition}

In contrast with the RLT relaxation \eqref{rlt}, strong duality, in general, may fail for the SDP-RLT relaxation \eqref{srlt}. A sufficient condition that ensures strong duality is the Slater condition, i.e., the existence of a strictly feasible solution.  

First, we observe that the Slater condition fails for \eqref{srlt} whenever $p > 0$.

\begin{lemma} \label{Slater}
Given an instance of \eqref{QP} satisfying Assumption~\ref{assump1}, the Slater condition holds for the SDP-RLT relaxation \eqref{srlt} if and only if $p = 0$.    
\end{lemma}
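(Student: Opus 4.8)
The statement is an equivalence, so I would argue the two directions separately. The plan is to show: (a) if $p = 0$, then a strictly feasible point exists; and (b) if $p > 0$, then every feasible point lies on the boundary of at least one of the conic constraints, so no strictly feasible point can exist. The Slater condition here requires a point $(x,X)$ that strictly satisfies the inequality constraints: $G^\top x < g$, the RLT inequality $G^\top X G - G^\top x\, g^\top - g\, x^\top G + g\, g^\top > \mathbf{0}$ componentwise, and the semidefinite constraint $X - x\, x^\top \succ \mathbf{0}$, while exactly meeting the equalities $H^\top x = h$ and $H^\top X = h\, x^\top$.

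First I would handle direction (b), which I expect to carry the real content. The obstruction when $p > 0$ comes from the equality constraint $H^\top X = h\, x^\top$ together with $H^\top x = h$, which I claim forces the matrix $X - x\, x^\top$ to be singular, thereby violating $X - x\,x^\top \succ \mathbf{0}$. The key computation is to left-multiply $X - x\,x^\top$ by $H^\top$: using $H^\top X = h\, x^\top$ and $H^\top x = h$ we get $H^\top(X - x\,x^\top) = h\,x^\top - h\,x^\top = \mathbf{0}$, so every column of $H$ lies in the null space of $X - x\,x^\top$ (equivalently, $H^\top (X - x\,x^\top) H = \mathbf{0}$ shows $\operatorname{range}(H)$ is annihilated). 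Since $\rank(H) = p \geq 1$ by Assumption~\ref{assump1}, the matrix $X - x\,x^\top$ has nontrivial null space and hence cannot be positive definite. This rules out any strictly feasible point, so the Slater condition fails.

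For direction (a), when $p = 0$ the equality blocks $H^\top x = h$ and $H^\top X = h\, x^\top$ are vacuous (empty-matrix conventions from \eqref{emptyinv}), so I only need to produce a point meeting the two inequality constraints strictly together with $X - x\,x^\top \succ \mathbf{0}$. I would take the interior point $x^\circ$ supplied by Assumption~\ref{assump1}, which satisfies $G^\top x^\circ < g$ strictly, and then set $X = x^\circ (x^\circ)^\top + \varepsilon I$ for small $\varepsilon > 0$. By construction $X - x^\circ (x^\circ)^\top = \varepsilon I \succ \mathbf{0}$; the RLT inequality evaluated at this point equals $G^\top(x^\circ (x^\circ)^\top + \varepsilon I)G - G^\top x^\circ g^\top - g (x^\circ)^\top G + g g^\top = (g - G^\top x^\circ)(g - G^\top x^\circ)^\top + \varepsilon\, G^\top G$, whose $(i,j)$ entry is $(g - G^\top x^\circ)_i (g - G^\top x^\circ)_j + \varepsilon (G^\top G)_{ij}$. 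Since $g - G^\top x^\circ > \mathbf{0}$, the rank-one term is strictly positive in every entry, so for $\varepsilon$ small enough the whole matrix stays strictly positive componentwise. Thus $(x^\circ, X)$ is strictly feasible, establishing Slater.

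The main obstacle is the argument in direction (b): one must recognize that the innocuous-looking equality $H^\top X = h\, x^\top$, when combined with $H^\top x = h$, is exactly a statement that $\operatorname{range}(H)$ sits in the kernel of the Schur complement $X - x\,x^\top$, forcing a rank deficiency that is incompatible with positive definiteness. Everything else is routine verification using the empty-matrix conventions and the interior point guaranteed by Assumption~\ref{assump1}.
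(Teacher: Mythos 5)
Your proof is correct and follows essentially the same route as the paper: for $p>0$ the equalities $H^\top x = h$ and $H^\top X = h\,x^\top$ force a kernel direction ruling out strict satisfaction of the semidefinite constraint, and for $p=0$ the point $(x^\circ,\, x^\circ(x^\circ)^\top + \varepsilon I)$ is strictly feasible for small $\varepsilon$. The only cosmetic difference is that you exhibit the kernel directly in the Schur complement $X - x\,x^\top$ (via the columns of $H$), whereas the paper works with the lifted matrix $\begin{bmatrix} 1 & x^\top \\ x & X \end{bmatrix}$ and the columns of $\begin{bmatrix} h^\top \\ -H \end{bmatrix}$ — an equivalent observation.
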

\begin{proof}
Suppose that $p > 0$. For any \eqref{srlt}-feasible solution $(\hat x, \hat X) \in \R^n \times \cS^n$, 
\begin{equation} \label{nullsp}
\begin{bmatrix}
   1 & \hat x^\top \\
   \hat x & \hat X
\end{bmatrix} \begin{bmatrix} h^\top \\ - H \end{bmatrix} = \begin{bmatrix}
    \mathbf{0} \\ \mathbf{0}
\end{bmatrix},
\end{equation}
where we used $H^\top \hat x = h$ and $H^\top \hat X = h \, \hat x^\top$. Therefore, 
\[
\left\langle \begin{bmatrix} h^\top \\ - H \end{bmatrix} \begin{bmatrix} h^\top \\ - H \end{bmatrix}^\top , \begin{bmatrix}
   1 & \hat x^\top \\
   \hat x & \hat X
\end{bmatrix} \right\rangle = 0,
\]
which implies that \eqref{srlt} cannot have a strictly feasible solution.

For $p = 0$, let $\hat x = x^\circ$, where $x^\circ$ is defined as in Assumption~\ref{assump1}, and let $\hat X = \hat x \, \hat x^\top + \epsilon \, I$, where $\epsilon \in \R_{++}$. Therefore,
\[
G^\top \hat X G - G^\top \hat x \, g^\top - g \, \hat x^\top G + g \, g^\top = \left( g - G^\top \hat x \right) \left( g - G^\top \hat x \right)^\top + \epsilon \, G^\top G.
\]
Since $\left( g - G^\top \hat x \right) \left( g - G^\top \hat x \right)^\top > \mathbf{0}$ by Assumption~\ref{assump1}, it follows that there exists a sufficiently small $\epsilon > 0$ such that both the linear inequality and semidefinite constraints in \eqref{srlt} are satisfied strictly. Therefore, the Slater condition holds for \eqref{srlt}. 

\end{proof}

\subsection{An Equivalent Reformulation via Facial Reduction}

By Lemma~\ref{Slater}, the Slater condition fails for the SDP-RLT relaxation \eqref{srlt} whenever $p > 0$. In view of this result, we present an equivalent reformulation of \eqref{srlt} that relies on facial reduction techniques aimed at reformulating the original problem by restricting it to the smallest face of the positive semidefinite cone that contains the feasible region (see, e.g.,~\cite{BW1981a,BW1981b,Pat2013,WakiM13}). 

Consider an instance of \eqref{QP} that satisfies Assumption~\ref{assump1}. If $p > 0$, then let $P$ be a matrix whose columns form a basis for the null space of $H^\top \in \R^{p \times n}$, i.e.,
\begin{equation} \label{defMmat}
H^\top P = \mathbf{0}, \quad P \in \R^{n \times (n - p)}, \quad \text{and} \quad \rank(P) = n - p,   \quad \text{if}~p \in \{1,\ldots,n-1\}. 
\end{equation}

If, on the other hand, $p = 0$, we then define 
\begin{equation} \label{defMmat0}
P = I \in \cS^n, \quad \text{if}~p = 0,   
\end{equation}

Next, let 
\begin{equation} \label{defU}
U := \begin{bmatrix} 1 & \mathbf{0} \\ x^\circ & P \end{bmatrix} \in \R^{(n+1) \times (1 + n - p)},    
\end{equation}
where $x^\circ$ is as defined in Assumption~\ref{assump1}. By \eqref{defMmat}, \eqref{defMmat0}, and \eqref{defU}, $\rank(U) = n - p + 1$.

Consider the following optimization problem:
\[
\begin{array}{llrcl}
\tag{$\mathsf{SRR}$} \label{srltr} & \nu^*_{\mathsf{SRR}} = \min\limits_{x \in \R^n, X \in \cS^n} & \textstyle\frac{1}{2}\langle Q, X \rangle + c^\top x & & \\
 & \textrm{s.t.} & & & \\
 & & G^\top x & \leq & g\\[0.2em]
 & & G^\top X G - G^\top x \, g^\top - g \, x^\top G + g \, g^\top & \geq & \mathbf{0}, \\[0.2em]
 & & \begin{bmatrix} 1 & x^T \\
 x & X \end{bmatrix} & \in & \cK,
\end{array}
\]
where 
\begin{equation} \label{defK}
\cK := \left\{ K \in \cS^{n+1}: K = U \, T \, U^\top, \quad T \in \cS^{n - p + 1}_+ \right\} \subseteq \cS^{n+1}_+,    
\end{equation}
and $U$ is given by \eqref{defU}. It is well-known and easy to check that $\cK$ is a face of the cone of positive semidefinite matrices (see, e.g.,~\cite{Pat2013}) and that it is a closed convex cone~\cite[Corollary 18.1.1]{rockafellar1970convex}. Furthermore, it is easy to verify that the dual cone is given by
\begin{equation} \label{defK*}
 \cK^* = \left\{L \in \cS^{n+1}: U^\top L \, U \succeq \mathbf{0}\right\} \supseteq \cS^{n+1}_+.  
\end{equation}

We first review some useful properties of $\cK$ and $\cK^*$.

\begin{lemma} \label{propK}
Let $\cK$ and $\cK^*$ be given by \eqref{defK} and \eqref{defK*}, respectively. Then, 
    \begin{align}
     \cK & = \cS^{n+1}_+, \quad \text{if}~p = 0, \label{Kchar1} \\
     \cK & = \left\{ K \in \cS^{n+1}: U^\top K \, U \succeq \mathbf{0}, \quad U^\top K \, V = \mathbf{0}, \quad V^\top K \, V = \mathbf{0} \right\}, \quad \text{if}~p > 0, \label{Kchar2} \\
     \cK^* & = \cS^{n+1}_+, \quad \text{if}~p = 0, \label{K*char1} \\
     \cK^* & = \left\{L \in \cS^{n+1}: \begin{array}{rcl} L & = & \begin{bmatrix} U & V \end{bmatrix} \begin{bmatrix} N_{UU} & N_{UV} \\[0.2em] N_{UV}^\top & N_{VV} \end{bmatrix} \begin{bmatrix} U & V \end{bmatrix}^\top, \\[0.2em]
     N_{UU} & \in & \cS^{n + 1 - p}_+, \\[0.2em]
     N_{UV} & \in & \R^{(n + 1 - p) \times p}, \\[0.2em]
     N_{VV} & \in & \cS^p
     \end{array} \right\}, \quad \text{if}~p > 0,\label{K*char2}
    \end{align}
    where $V \in \R^{(n+1) \times p}$ is a matrix such that $\begin{bmatrix} U & V \end{bmatrix} \in \R^{(n+1) \times (n+1)}$ is invertible and $U^\top V = \mathbf{0} \in \R^{(n- p +1) \times p}$. Furthermore, 
    \begin{align}
     \relint(\cK) & = \intK(\cK) = \cS^{n+1}_{++}, \quad \text{if}~p = 0, \label{relint1} \\ 
      \relint(\cK) & = \left\{ K \in \cS^{n+1}: K = U \, T \, U^\top, \quad T \in \cS^{n - p + 1}_{++} \right\}, \quad \text{if}~p > 0. 
    \end{align}   
\end{lemma}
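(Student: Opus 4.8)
The plan is to treat the cases $p=0$ and $p>0$ separately, since the column structure of $U$ differs. When $p=0$ we have $P=I$, so the matrix $U$ in \eqref{defU} is square, lower triangular with unit diagonal, and hence invertible. The congruence $T \mapsto U\,T\,U^\top$ is then a bijection of $\cS^{n+1}$ onto itself preserving positive semidefiniteness in both directions, so $\cK=\{U\,T\,U^\top:T\succeq\mathbf{0}\}=\cS^{n+1}_+$, which is \eqref{Kchar1}. Self-duality of the positive semidefinite cone (or directly \eqref{defK*} with $U$ invertible) gives $\cK^*=\cS^{n+1}_+$, namely \eqref{K*char1}, and \eqref{relint1} is the standard fact that $\cS^{n+1}_+$ is full dimensional in $\cS^{n+1}$ with relative interior and interior equal to $\cS^{n+1}_{++}$.

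The substantive case is $p>0$. First I would fix the auxiliary matrix $V$: since $\rank(U)=n-p+1$, the orthogonal complement of the range of $U$ has dimension $p$, and taking the columns of $V\in\R^{(n+1)\times p}$ to be a basis of this complement yields $U^\top V=\mathbf{0}$ and makes $A:=\begin{bmatrix} U & V \end{bmatrix}$ invertible. The key step is the intrinsic description $\cK=\{K\in\cS^{n+1}:K\succeq\mathbf{0},\ \mathrm{range}(K)\subseteq\mathrm{range}(U)\}$. The inclusion ``$\subseteq$'' is immediate from $K=U\,T\,U^\top$; for ``$\supseteq$'' I would factor a positive semidefinite $K$ as $K=L\,L^\top$, observe that $\mathrm{range}(L)=\mathrm{range}(K)\subseteq\mathrm{range}(U)$ forces $L=U\,C$ for some $C$ because $U$ has full column rank, and conclude $K=U\,(C\,C^\top)\,U^\top$ with $C\,C^\top\succeq\mathbf{0}$.

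Next I would translate this description into the three conditions of \eqref{Kchar2}. Since $\mathrm{range}(V)=\mathrm{range}(U)^\perp$, the condition $\mathrm{range}(K)\subseteq\mathrm{range}(U)$ is equivalent to $K\,V=\mathbf{0}$; and because $A^\top$ is invertible, $K\,V=\mathbf{0}$ is in turn equivalent to the pair $U^\top K\,V=\mathbf{0}$ and $V^\top K\,V=\mathbf{0}$, which are exactly the two blocks of $A^\top K\,V$. Under $K\,V=\mathbf{0}$ the congruence $A^\top K\,A$ is block diagonal with diagonal blocks $U^\top K\,U$ and $\mathbf{0}$, so $K\succeq\mathbf{0}$ if and only if $U^\top K\,U\succeq\mathbf{0}$; combining these equivalences gives \eqref{Kchar2}. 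For the dual \eqref{K*char2}, I would use that $N\mapsto A\,N\,A^\top$ is a bijection of $\cS^{n+1}$, so every $L$ admits a unique representation $L=A\,N\,A^\top$ with $N$ partitioned into $N_{UU},N_{UV},N_{VV}$. Using $U^\top A=\begin{bmatrix} U^\top U & \mathbf{0}\end{bmatrix}$ (from $U^\top V=\mathbf{0}$), one computes $U^\top L\,U=(U^\top U)\,N_{UU}\,(U^\top U)$; since $U^\top U\succ\mathbf{0}$, congruence shows $U^\top L\,U\succeq\mathbf{0}$ if and only if $N_{UU}\succeq\mathbf{0}$, with $N_{UV}$ and $N_{VV}$ unconstrained. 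In view of \eqref{defK*} this is precisely \eqref{K*char2}.

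Finally, for the relative interior when $p>0$, I would note that $\phi:T\mapsto U\,T\,U^\top$ is linear and injective (full column rank of $U$), hence an isomorphism of $\cS^{n-p+1}$ onto the span of $\cK$, with $\cK=\phi(\cS^{n-p+1}_+)$. Since the relative interior commutes with affine maps~\cite[Theorem~6.6]{rockafellar1970convex} and $\relint(\cS^{n-p+1}_+)=\cS^{n-p+1}_{++}$, we obtain $\relint(\cK)=\phi(\cS^{n-p+1}_{++})=\{U\,T\,U^\top:T\succ\mathbf{0}\}$. I expect the main obstacle to be the bookkeeping in the $p>0$ case, in particular verifying that the two conditions on $V$ are jointly equivalent to $K\,V=\mathbf{0}$ and that congruence by $A$ reduces positive semidefiniteness of $K$ to that of the single block $U^\top K\,U$; the remaining steps are routine full-column-rank factorizations and congruence arguments.
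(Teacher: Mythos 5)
Your proof is correct, but it is genuinely different from what the paper does: the paper offers no argument at all for this lemma and simply refers the reader to \cite[Lemma 1]{PermenterP18} and \cite{DrusvyatskiyW17}, i.e., it outsources the statement to the facial-reduction literature. Your proposal replaces that pointer with a self-contained elementary argument whose engine is the intrinsic description $\cK = \{K \in \cS^{n+1} : K \succeq \mathbf{0},\ \mathrm{range}(K) \subseteq \mathrm{range}(U)\}$, obtained from the factorization $K = L\,L^\top$ and the full column rank of $U$, followed by congruence with the invertible matrix $\begin{bmatrix} U & V \end{bmatrix}$, under which $K\,V=\mathbf{0}$ splits into the two zero-block conditions of \eqref{Kchar2} and positive semidefiniteness of $K$ reduces to that of the single block $U^\top K\,U$; the dual characterization \eqref{K*char2} then falls out of the unique representation $L = A\,N\,A^\top$ together with $U^\top L\,U = (U^\top U)\,N_{UU}\,(U^\top U)$ and $U^\top U \succ \mathbf{0}$, starting from \eqref{defK*}, which you are entitled to take as given since the lemma is stated relative to it. Two small remarks: your construction of $V$ as a basis of $\mathrm{range}(U)^\perp$ is not a restriction, since any $V$ satisfying the lemma's hypotheses ($U^\top V = \mathbf{0}$ and invertibility of $\begin{bmatrix} U & V \end{bmatrix}$) automatically has $\mathrm{rank}(V) = p$ and hence $\mathrm{range}(V) = \mathrm{null}(U^\top) = \mathrm{range}(U)^\perp$, so your argument covers the statement as written --- it would be worth saying this explicitly; and the step $\mathrm{range}(K) \subseteq \mathrm{range}(U) \Leftrightarrow K\,V = \mathbf{0}$ silently uses the symmetry of $K$ (via $\mathrm{null}(K) = \mathrm{range}(K)^\perp$), which is available here but deserves a word. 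The trade-off between the two routes is clear: the paper keeps its exposition short by citing known facial-reduction facts, whereas your proof makes the manuscript self-contained and exposes exactly why the three conditions in \eqref{Kchar2} encode membership in the face $\cK$ of $\cS^{n+1}_+$; your treatment of the relative interior via \cite[Theorem~6.6]{rockafellar1970convex} (relative interiors commute with linear images) is the standard argument and is sound, with injectivity of $T \mapsto U\,T\,U^\top$ guaranteeing that $\relint(\cK)$ is precisely the set of $U\,T\,U^\top$ with $T \succ \mathbf{0}$.
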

\begin{proof}
The reader is referred to \cite[Lemma 1]{PermenterP18} and \cite{DrusvyatskiyW17}.   

\end{proof}

We next establish the equivalence between \eqref{srlt} and \eqref{srltr}.

\begin{proposition} \label{srvssrr} \label{srltvssrltr}
The optimization problems \eqref{srlt} and \eqref{srltr} are equivalent, and $\nu^*_{\mathsf{SR}} = \nu^*_{\mathsf{SRR}}$.
\end{proposition}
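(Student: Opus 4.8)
The plan is to show that \eqref{srlt} and \eqref{srltr} have identical feasible regions; since they share the same objective $\textstyle\frac{1}{2}\langle Q, X\rangle + c^\top x$, the equality $\nu^*_{\mathsf{SR}} = \nu^*_{\mathsf{SRR}}$ follows at once. Both problems impose $G^\top x \leq g$ and the quadratic RLT inequality $G^\top X G - G^\top x\, g^\top - g\, x^\top G + g\, g^\top \geq \mathbf{0}$, so it suffices to prove that, writing $W := \begin{bmatrix} 1 & x^\top \\ x & X \end{bmatrix}$, the three remaining constraints of \eqref{srlt}, namely $H^\top x = h$, $H^\top X = h\, x^\top$, and $X - x\, x^\top \succeq \mathbf{0}$, are jointly equivalent to the single conic constraint $W \in \cK$ of \eqref{srltr}. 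The case $p = 0$ is immediate: the two $H$-constraints are vacuous, $\cK = \cS^{n+1}_+$ by \eqref{Kchar1}, and $W \succeq \mathbf{0} \iff X - x\, x^\top \succeq \mathbf{0}$ by the Schur complement \eqref{Schurcomp}. I therefore focus on $p > 0$.

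The key device is to choose the complementary matrix $V$ of Lemma~\ref{propK} explicitly as $V := \begin{bmatrix} h^\top \\ -H \end{bmatrix} \in \R^{(n+1) \times p}$. A direct computation using $H^\top x^\circ = h$ and $H^\top P = \mathbf{0}$ gives $U^\top V = \mathbf{0}$, while $\rank(U) = n - p + 1$ together with $\rank(V) = \rank(H) = p$ and the orthogonality of their column spaces show that $\begin{bmatrix} U & V \end{bmatrix}$ is invertible; hence $V$ is a legitimate choice. The point of this choice is that $V$ encodes precisely the two equality constraints: repeating the computation already carried out in the proof of Lemma~\ref{Slater}, one checks that $W\, V = \mathbf{0}$ holds if and only if both $H^\top x = h$ and $H^\top X = h\, x^\top$ hold, the top block of $W V$ being $(h - H^\top x)^\top$ and the bottom block being $x\, h^\top - X H$.

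It then remains to prove $W \in \cK \iff \bigl(W \succeq \mathbf{0}\ \text{and}\ W V = \mathbf{0}\bigr)$; combined with \eqref{Schurcomp} and the previous paragraph, this identifies the two feasible sets. The forward direction is immediate from the definition \eqref{defK}: if $W = U\, T\, U^\top$ with $T \succeq \mathbf{0}$, then $W \succeq \mathbf{0}$ and $W V = U\, T\, (U^\top V) = \mathbf{0}$. For the reverse direction I would use invertibility of $\begin{bmatrix} U & V \end{bmatrix}$ to write $W = \begin{bmatrix} U & V \end{bmatrix} S \begin{bmatrix} U & V \end{bmatrix}^\top$ for a unique $S \in \cS^{n+1}$, with $W \succeq \mathbf{0} \iff S \succeq \mathbf{0}$; since $\begin{bmatrix} U & V \end{bmatrix}^\top V = \begin{bmatrix} \mathbf{0} \\ V^\top V \end{bmatrix}$ and $V^\top V$ is nonsingular, the condition $W V = \mathbf{0}$ forces the $UV$- and $VV$-blocks of $S$ to vanish, so $S \succeq \mathbf{0}$ reduces to positive semidefiniteness of its leading block $T$ and $W = U\, T\, U^\top \in \cK$ (equivalently, one may invoke the explicit description \eqref{Kchar2} directly). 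This yields equality of the feasible regions and hence $\nu^*_{\mathsf{SR}} = \nu^*_{\mathsf{SRR}}$. The only genuinely non-routine step is this reverse implication, i.e.\ recovering the factorization $W = U\, T\, U^\top$ from the range information $W V = \mathbf{0}$; the change-of-basis argument makes it work, and the invertibility of $\begin{bmatrix} U & V \end{bmatrix}$ guaranteed by Assumption~\ref{assump1} is what the whole argument rests on, everything else being bookkeeping that reuses the identity from Lemma~\ref{Slater}.
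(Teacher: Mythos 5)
Your proposal is correct and follows essentially the same route as the paper: the same case split on $p$, the same explicit choice $V = \begin{bmatrix} h^\top \\ -H \end{bmatrix}$ with $U^\top V = \mathbf{0}$ and invertibility of $\begin{bmatrix} U & V \end{bmatrix}$, and the same identification of the conic constraint with the two $H$-equalities plus $X - x\,x^\top \succeq \mathbf{0}$ via Lemma~\ref{propK}. Your two local variations---proving invertibility by a rank/orthogonality count rather than solving the linear system \eqref{sys1}, and extracting the equalities from $W V = \mathbf{0}$ by a change of basis instead of the paper's factorization $\hat X = \hat x\,\hat x^\top + \hat D\,\hat D^\top$ with $V^\top W V = \mathbf{0}$---are valid and if anything slightly cleaner, but they are cosmetic rather than a different method.
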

\begin{proof}
First, assume that $p = 0$. If $(\hat x, \hat X) \in \R^n \times \cS^n$ is \eqref{srltr}-feasible, then $(\hat x, \hat X) \in \R^n \times \cS^n$ is \eqref{srlt}-feasible since $\cK = \cS^n_+$ by Lemma~\ref{propK}. 

Conversely, if $(\hat x, \hat X) \in \R^n \times \cS^n$ is \eqref{srlt}-feasible, then
\[
\begin{bmatrix}
   1 & \hat x^\top \\
   \hat x & \hat X
\end{bmatrix} = U \left( U^{-1} \begin{bmatrix}
   1 & \hat x^\top \\
   \hat x & \hat X
\end{bmatrix} \left( U^{-1} \right)^\top \right) U^\top = U \, T \, U^\top \in \cK,
\]
where we used the fact that $T \succeq \mathbf{0}$ and $U \in \R^{(n+1) \times (n+1)}$ is invertible by \eqref{defU} and \eqref{defMmat0}. Therefore, $(\hat x, \hat X) \in \R^n \times \cS^n$ is \eqref{srltr}-feasible.

Suppose now that $p > 0$. We define 
\begin{equation} \label{defT}
V = \begin{bmatrix} h^\top \\ - H \end{bmatrix} \in \R^{(n + 1) \times p}.
\end{equation}
Note that 
\[
U^\top V = \begin{bmatrix}
    1 &  (x^\circ)^\top \\ \mathbf{0} & P^\top 
\end{bmatrix} \begin{bmatrix} h^\top \\ - H \end{bmatrix} = \begin{bmatrix}
    \mathbf{0} \\ \mathbf{0}
\end{bmatrix},
\]
where we used Assumption~\ref{assump1} and \eqref{defMmat}. Furthermore, for $(\alpha, a) \in \R \times \R^n$, consider the following system:
\begin{equation} \label{sys1}
\begin{bmatrix} U & V \end{bmatrix}^\top \begin{bmatrix} \alpha \\ a \end{bmatrix} = \begin{bmatrix} 1 &  (x^\circ)^\top \\ \mathbf{0} & P^\top \\
h & -H^\top
\end{bmatrix} \begin{bmatrix} \alpha \\ a \end{bmatrix} =
\begin{bmatrix}
  0 \\ \mathbf{0} \\ \mathbf{0}  
\end{bmatrix}.
\end{equation}
Since $H \in \R^{n \times p}$ has full column rank by Assumption~\ref{assump1}, it follows from  \eqref{defMmat} that the columns of $H$ form a basis for the null space of $P^\top$. By the second row of \eqref{sys1}, we have $P^\top a = \mathbf{0}$, which implies that there exists a unique vector $d \in \R^p$ such that $H \, d = a$. Substituting this into the first row of \eqref{sys1} and using Assumption~\ref{assump1}, we obtain
\[
\alpha + (x^\circ)^\top a = \alpha + (x^\circ)^\top H \, d = \alpha + h^\top d = 0.
\]
Finally, multiplying the third row of \eqref{sys1} from the left by $-d^\top$, using the above identity and $H \, d = a$, we have
\[
- \alpha \, d^\top h + d^\top H^\top a = \alpha^2 + a^\top a = 0,
\]
which implies that the unique solution of \eqref{sys1} is given by $(\alpha, a) = (0, \mathbf{0}) \in \R \times \R^n$. Therefore, $\begin{bmatrix} U & V \end{bmatrix} \in \R^{(n + 1) \times (n + 1)}$ is an invertible matrix that satisfies the assumptions of Lemma~\ref{propK}.

If $(\hat x, \hat X) \in \R^n \times \cS^n$ is \eqref{srltr}-feasible, then $\hat X - \hat x \, \hat x \succeq \mathbf{0}$ by \eqref{defK} and \eqref{Schurcomp}. Therefore, there exists a $\hat D \in \R^{n \times n}$ such that 
$\hat X = \hat x \, \hat x^\top + \hat D \, \hat D^\top$. By \eqref{Kchar2},
\begin{eqnarray*}
V^\top \begin{bmatrix}
   1 & \hat x^\top \\
   \hat x & \hat X
\end{bmatrix} V & = & \begin{bmatrix} h & - H^\top \end{bmatrix} \begin{bmatrix}
   1 & \quad \hat x^\top \\
   \hat x & \quad \hat x \, \hat x^\top + \hat D \, \hat D^\top
\end{bmatrix} \begin{bmatrix} h^\top \\ - H \end{bmatrix} \\
 & = & \left(h - H^\top \hat x \right) \left(h - H^\top \hat x \right)^\top + \left( H^\top \hat D \right) \left( H^\top \hat D \right)^\top \\
 & = & \mathbf{0},
\end{eqnarray*}
which implies that $H^\top \hat x = h$ and $H^\top \hat D = \mathbf{0}$. Therefore, 
\[
H^\top \hat X = H^\top \hat x \, \hat x^\top + H^\top \hat D \, \hat D^\top = h \, \hat x^\top,
\]
which implies that $(\hat x, \hat X) \in \R^n \times \cS^n$ is \eqref{srlt}-feasible. 

Conversely, if $(\hat x, \hat X) \in \R^n \times \cS^n$ is \eqref{srlt}-feasible, then $(\hat x, \hat X) \in \R^n \times \cS^n$ is \eqref{srltr}-feasible by \eqref{Schurcomp}, \eqref{nullsp}, and \eqref{Kchar2}. 

The last assertion follows from the fact that the objective functions are the same.

\end{proof}

\subsection{Optimality Conditions}

In this section, we first present the dual problem of the equivalent formulation \eqref{srltr}. By relying on Proposition~\ref{srvssrr}, we establish necessary and sufficient optimality conditions for \eqref{srlt}. Finally, we deduce some useful relations about the set of optimal solutions of \eqref{srlt}.

First, we consider the dual problem of the reduced SDP-RLT relaxation \eqref{srltr}. We define the dual variables $(u, S) \in \R^m \times \cS^m$ corresponding to the two sets of linear constraints in \eqref{srltr}. Concerning the conic constraint, we introduce the dual variable
\[
\begin{bmatrix}
    \beta & b^\top \\
    b & B 
\end{bmatrix} \in \cS^{n + 1}, 
\]
where $\beta \in \R$, $b \in \R^{n}$, and $B \in \cS^{n}$. After scaling the above dual matrix variable and $S$ by a factor of $\textstyle\frac{1}{2}$, we obtain the following dual problem of \eqref{srltr}:

\[
\begin{array}{llrcl}
\tag{$\mathsf{SRRD}$} \label{srltrd} & \nu^*_{\mathsf{SRRD}} 
 = \max\limits_{\substack{u \in \R^m, S \in \cS^m \\
\beta \in \R, b \in \R^{n}, B \in \cS^{n}}} & -u^\top g - \textstyle\frac{1}{2} g^\top S \, g - \textstyle\frac{1}{2} \beta & & \\
 & \textrm{s.t.} & & & \\
  & & - G \, u - G \, S \, g + b & = & c \\[0.25em]
 & &  G \, S \, G^\top + B & = & Q \\[0.25em]
 & & S & \geq & \mathbf{0} \\[0.25em]
 & & u & \geq & \mathbf{0} \\[0.25em]
 & & \begin{bmatrix}
    \beta & b^\top \\
    b & B 
\end{bmatrix} & \in & \cK^*,
\end{array}
\]
where $\cK^*$ is defined as in \eqref{defK*}.

We next present the optimality conditions for the SDP-RLT formulation \eqref{srlt}.

\begin{proposition} \label{opt_cond_srr}
Let $(\hat x, \hat X) \in \R^{n} \times \cS^{n}$ be \eqref{srlt}-feasible. Then, $(\hat x, \hat X)$ is an optimal solution of \eqref{srlt} if and only if 
there exists 
$(\hat u, \hat S, \hat \beta, \hat b, \hat B) \in \R^m \times \cS^m \times \R \times \R^{n} \times \cS^{n}$ such that
\begin{eqnarray} \label{opt_form_srr}
- G \, \hat u - G \, \hat S \, g + \hat b & = & c, \label{opt_c_srr} \\
G \, \hat S \, G^\top + \hat B & = & Q, \label{opt_Q_srr} \\ 
\hat u & \geq & \mathbf{0}, \label{opt_u_srr} \\
\hat S & \geq & \mathbf{0}, \label{opt_S_srr} \\
\begin{bmatrix}
    \hat \beta & \hat b^\top \\
    \hat b & \hat B 
\end{bmatrix} & \in & \cK^*,  \label{opt_B_b_srr} \\
\hat u^\top \left( g - G^\top \hat x \right) & = & 0, \label{opt_cs1_srr} \\
\left\langle \hat S, G^\top \hat X \, G - G^\top \hat x \, g^\top - g \,  \hat x^\top G + g \, g^\top \right\rangle & = & 0, \label{opt_cs2_srr} \\
\left\langle \begin{bmatrix}
    \hat \beta & \hat b^\top \\ \hat b & \hat B 
\end{bmatrix}, \begin{bmatrix}
    1 & \hat x^\top \\ \hat x & \hat X 
\end{bmatrix} \right\rangle & = & 0. \label{opt_sc3_srr}
\end{eqnarray}
Furthermore, $\nu^*_{\mathsf{SR}} = \nu^*_{\mathsf{SRR}} = \nu^*_{\mathsf{SRRD}}$.
\end{proposition}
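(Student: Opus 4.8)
The plan is to route everything through the facially reduced formulation \eqref{srltr}: first establish that \eqref{srltr} satisfies the Slater condition, then invoke conic strong duality for the pair \eqref{srltr}--\eqref{srltrd}, and finally transport the resulting complementarity conditions back to \eqref{srlt} using the equivalence in Proposition~\ref{srvssrr}. This is the natural strategy because Lemma~\ref{Slater} tells us strong duality cannot be obtained directly for \eqref{srlt} when $p>0$, whereas \eqref{srltr} is designed so that strict feasibility is recovered over the smaller cone $\cK$.

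First I would verify strict feasibility of \eqref{srltr} in the refined (facial) sense. Taking $\hat x = x^\circ$ and $\hat X = x^\circ (x^\circ)^\top + \epsilon\, P P^\top$ for small $\epsilon > 0$, with $P$ as in \eqref{defMmat}--\eqref{defMmat0}, a direct computation gives
\[
\begin{bmatrix} 1 & \hat x^\top \\ \hat x & \hat X \end{bmatrix} = U \begin{bmatrix} 1 & \mathbf{0} \\ \mathbf{0} & \epsilon\, I \end{bmatrix} U^\top,
\]
so this matrix lies in $\relint(\cK)$ by Lemma~\ref{propK}. Moreover $G^\top \hat x < g$ by Assumption~\ref{assump1}, and
\[
G^\top \hat X G - G^\top \hat x\, g^\top - g\, \hat x^\top G + g\, g^\top = \left(g - G^\top \hat x\right)\left(g - G^\top \hat x\right)^\top + \epsilon\, G^\top P P^\top G,
\]
whose first term is entrywise positive; hence for $\epsilon$ small enough the remaining linear inequality also holds strictly. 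Thus all three constraint blocks are satisfied strictly, which is exactly the point where facial reduction earns its keep.

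With Slater in hand, I would apply conic strong duality to \eqref{srltr}--\eqref{srltrd}: there is no duality gap, so $\nu^*_{\mathsf{SRR}} = \nu^*_{\mathsf{SRRD}}$, and whenever this common value is finite the dual optimum is attained (in the unbounded case all quantities equal $-\infty$, with \eqref{srltrd} infeasible). Combined with $\nu^*_{\mathsf{SR}} = \nu^*_{\mathsf{SRR}}$ from Proposition~\ref{srvssrr}, this gives the asserted chain $\nu^*_{\mathsf{SR}} = \nu^*_{\mathsf{SRR}} = \nu^*_{\mathsf{SRRD}}$. The conditions \eqref{opt_c_srr}--\eqref{opt_B_b_srr} are precisely dual feasibility for \eqref{srltrd}, and \eqref{opt_cs1_srr}--\eqref{opt_sc3_srr} are the complementary slackness relations for the three constraint blocks. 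For the ``if'' direction I would use only weak duality: given an \eqref{srltr}-feasible $(\hat x, \hat X)$ together with a dual-feasible tuple satisfying the three complementarity relations, the primal and dual objective values coincide, forcing optimality. For the ``only if'' direction I would use Slater-based dual attainment to produce an optimal dual tuple; the optimal duality gap is zero and can be written as the sum of the three inner products in \eqref{opt_cs1_srr}--\eqref{opt_sc3_srr}, each nonnegative (by $\hat u \geq \mathbf{0}$, $\hat S \geq \mathbf{0}$, and membership in $\cK$ and $\cK^*$), so each term must vanish.

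Finally I would transfer the statement to \eqref{srlt}. By Proposition~\ref{srvssrr} the feasible $(x,X)$-pairs and the objective values of \eqref{srlt} and \eqref{srltr} coincide, so $(\hat x, \hat X)$ is optimal for \eqref{srlt} if and only if it is optimal for \eqref{srltr}; since the optimality conditions are phrased entirely through the dual of \eqref{srltr}, they characterize optimality for \eqref{srlt} as well. I expect the main obstacle to be the strict-feasibility verification, and in particular confirming that the constructed point lies in $\relint(\cK)$ rather than merely in $\cK$ --- this is what legitimizes invoking the conic duality theorem and is precisely the feature that separates \eqref{srltr} from the Slater-deficient \eqref{srlt}.
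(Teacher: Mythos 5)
Your proposal is correct and follows essentially the same route as the paper: exhibit the strictly feasible point $\bigl(x^\circ,\, x^\circ (x^\circ)^\top + \epsilon\, P P^\top\bigr)$ lying in $\relint(\cK)$ via Lemma~\ref{propK}, invoke Slater-based strong duality and dual attainment for the pair \eqref{srltr}--\eqref{srltrd}, and transfer the resulting conic optimality conditions back to \eqref{srlt} through Proposition~\ref{srvssrr}. The only difference is one of exposition --- you unpack the ``standard conic duality'' step (weak duality for sufficiency, dual attainment plus the decomposition of the zero gap into three nonnegative complementarity terms for necessity) that the paper states in a single sentence.
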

\begin{proof}
Let $(\hat x, \hat X)$ be \eqref{srlt}-feasible. By Proposition~\ref{srvssrr}, $(\hat x, \hat X)$ is \eqref{srltr}-feasible. First, we observe that \eqref{srltr} satisfies the Slater condition. Let $\epsilon > 0$ and 
\[
\tilde T = \begin{bmatrix} 1 & \mathbf{0} \\
\mathbf{0} & \epsilon \, I \end{bmatrix} \in \cS^{n - p + 1}_{++}.
\]
By \eqref{defU} and Lemma~\ref{propK},
\[
\begin{bmatrix} 1 & \tilde x^\top \\
\tilde x & \tilde X \end{bmatrix} = U \, \tilde T \, U^\top = \begin{bmatrix} 1 & \quad \left( x^\circ \right)^\top \\ x^\circ & \quad x^\circ \, \left( x^\circ \right)^\top + \epsilon \, P \, P^\top \end{bmatrix} \in \relint(\cK).
\]
Using a similar argument as in the proof of Lemma~\ref{Slater}, there exists a sufficiently small $\epsilon > 0$ such that the linear inequality constraints of \eqref{srltr} are strictly satisfied. Therefore, the Slater condition holds for \eqref{srltr}, which implies that strong duality holds and the optimal value is attained in \eqref{srltrd}. Together with Proposition~\ref{srvssrr}, we obtain $\nu^*_{\mathsf{SR}} = \nu^*_{\mathsf{SRR}} = \nu^*_{\mathsf{SRRD}}$. The relations \eqref{opt_c_srr}--\eqref{opt_sc3_srr} now follow from standard conic duality.

\end{proof}

By relying on Proposition~\ref{opt_cond_srr}, we next deduce further useful conditions between optimal solutions of \eqref{srlt} and \eqref{srltrd}, which will play a key role in establishing our main result in the next section.

\begin{proposition} \label{useful_opt_cond_srlt}
Let $(\hat x, \hat X) \in \R^n \times \cS^n$ and $(\hat u, \hat S, \hat \beta, \hat b, \hat B) \in \R^m \times \cS^m \times \R \times \R^{n} \times \cS^{n}$ be an optimal solution of 
\eqref{srlt} and \eqref{srltr}, respectively. Then, there exists 
$(\hat z, \hat W) \in \R^p \times \R^{p \times n}$ such that
\begin{eqnarray} \label{opt_form_srr1}
- \hat b^\top \hat x + h^\top \hat z & = & \hat \beta, \label{rel_betar} \\
\hat B \, \hat x + \hat b + H \, \hat z & = & \mathbf{0}, \label{rel_zr} \\
\hat B \, \hat X + \hat b \, \hat x^\top & = & H \, \hat W. \label{rel_Wr}
\end{eqnarray}  
Furthermore,
\begin{equation}
\left\langle \hat B, \hat X - \hat x \, \hat x^\top \right\rangle = 0. \label{rel_csr} 
\end{equation}   
\end{proposition}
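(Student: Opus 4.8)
The plan is to work entirely at the level of the two lifted matrices $M = \begin{bmatrix} 1 & \hat x^\top \\ \hat x & \hat X \end{bmatrix}$ and $N = \begin{bmatrix} \hat\beta & \hat b^\top \\ \hat b & \hat B \end{bmatrix}$, and to extract all four relations from a single structural identity, namely that every column of $NM$ lies in the column space of $V$, where $V$ is the facial-reduction matrix of \eqref{defT}. Since $(\hat x,\hat X)$ is \eqref{srlt}-feasible, Proposition~\ref{srvssrr} gives $M \in \cK$, so by \eqref{defK} I may write $M = U\,T\,U^\top$ with $T \in \cS^{n-p+1}_+$. Since the dual variables satisfy the optimality conditions of Proposition~\ref{opt_cond_srr}, condition \eqref{opt_B_b_srr} together with \eqref{defK*} gives $U^\top N\,U \succeq \mathbf 0$, and \eqref{opt_sc3_srr} gives $\langle N, M\rangle = 0$.

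The key step is the identity $U^\top N M = \mathbf 0$. Indeed, $\langle N, M\rangle = \langle N, U T U^\top\rangle = \langle U^\top N U,\, T\rangle = 0$, and since $U^\top N U$ and $T$ are both positive semidefinite, their vanishing trace inner product forces $(U^\top N U)\,T = \mathbf 0$; hence $U^\top N M = (U^\top N U)\,T\,U^\top = \mathbf 0$. Thus every column of $NM$ is orthogonal to $\mathrm{col}(U)$. Because $\begin{bmatrix} U & V\end{bmatrix}$ is invertible and $U^\top V = \mathbf 0$ (both established in the proof of Proposition~\ref{srvssrr}), the orthogonal complement of $\mathrm{col}(U)$ in $\R^{n+1}$ is exactly $\mathrm{col}(V) = \mathrm{col}\!\begin{bmatrix} h^\top \\ -H\end{bmatrix}$, so I may write $NM = V\,\Lambda$ for some $\Lambda \in \R^{p \times (n+1)}$. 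Comparing blocks then yields the relations: equating the first column $N\begin{bmatrix} 1 \\ \hat x\end{bmatrix}$ with $V\lambda_1$ produces \eqref{rel_betar} and \eqref{rel_zr} upon setting $\hat z = \lambda_1$, while the bottom block of $N\begin{bmatrix}\hat x^\top \\ \hat X\end{bmatrix} = V\Lambda_{2:}$, which equals $\hat b\,\hat x^\top + \hat B\,\hat X = -H\Lambda_{2:}$, produces \eqref{rel_Wr} upon setting $\hat W = -\Lambda_{2:}$. I note that this argument is uniform in $p$: when $p = 0$ the matrix $U \in \R^{(n+1)\times(n+1)}$ is invertible by \eqref{defMmat0} and \eqref{defU}, so $U^\top N M = \mathbf 0$ collapses to $NM = \mathbf 0$, and \eqref{rel_betar}–\eqref{rel_Wr} hold with $\hat z$ and $\hat W$ empty, consistent with the conventions of \eqref{emptyinv}.

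Finally, \eqref{rel_csr} follows by combining the relations just obtained with the complementarity \eqref{opt_sc3_srr}. Expanding $\langle N, M\rangle = \hat\beta + 2\,\hat b^\top\hat x + \langle \hat B, \hat X\rangle = 0$ gives $\langle \hat B, \hat X\rangle = -\hat\beta - 2\,\hat b^\top \hat x$. On the other hand, using \eqref{rel_zr} to substitute $\hat B\,\hat x = -\hat b - H\hat z$, together with the feasibility $H^\top \hat x = h$ and \eqref{rel_betar}, I obtain $\langle \hat B, \hat x\,\hat x^\top\rangle = \hat x^\top \hat B\,\hat x = -\hat b^\top \hat x - h^\top \hat z = -2\,\hat b^\top \hat x - \hat\beta$. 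Subtracting the two expressions yields $\langle \hat B, \hat X - \hat x\,\hat x^\top\rangle = 0$, as required.

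The main obstacle is the key step $U^\top N M = \mathbf 0$, which rests on the facial-reduction descriptions \eqref{defK}–\eqref{defK*} of $\cK$ and $\cK^*$, on the positive-semidefinite complementarity fact that $\langle A, B\rangle = 0$ with $A,B \succeq \mathbf 0$ forces $AB = \mathbf 0$, and on the orthogonal-complement relationship $\mathrm{col}(V) = \mathrm{col}(U)^\perp$; once this is in place, reading off the blocks of $NM$ to obtain $\hat z$ and $\hat W$, and then deriving \eqref{rel_csr}, are routine computations.
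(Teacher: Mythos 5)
Your proof is correct and takes essentially the same route as the paper's: both rest on the key identity $U^\top N M = \mathbf{0}$, obtained from $M = U \, \hat T \, U^\top$, $U^\top N \, U \succeq \mathbf{0}$ (via \eqref{opt_B_b_srr} and \eqref{defK*}), and the complementarity \eqref{opt_sc3_srr} forcing $\bigl( U^\top N \, U \bigr) \hat T = \mathbf{0}$, and both then derive \eqref{rel_csr} by the same substitution chain using \eqref{rel_betar}, \eqref{rel_zr}, and $H^\top \hat x = h$. The only cosmetic difference is in unpacking that identity: the paper reads off the $(1,1)$-, $(2,1)$-, and $(2,2)$-blocks of $U^\top N M = \mathbf{0}$ and invokes that the columns of $H$ span the null space of $P^\top$, whereas you equivalently write $NM = V \Lambda$ using $\mathrm{col}(V) = \mathrm{col}(U)^\perp$ and read off $\hat z$ and $\hat W$ from $\Lambda$.
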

\begin{proof}
Let $(\hat x, \hat X) \in \R^n \times \cS^n$ and $(\hat u, \hat S, \hat \beta, \hat b, \hat B) \in \R^m \times \cS^m \times \R \times \R^{n} \times \cS^{n}$ be an optimal solution of 
\eqref{srlt} and \eqref{srltr}, respectively. By Proposition~\ref{opt_cond_srr}, the relations \eqref{opt_c_srr}--\eqref{opt_sc3_srr} hold. 
Since $(\hat x, \hat X)$ is \eqref{srltr}-feasible by Proposition~\ref{srvssrr}, there exists $\hat T \in \cS^{n - p + 1}_+$ such that
\begin{equation} \label{pr9rel1}
\begin{bmatrix}
    1 & \hat x^\top \\ \hat x & \hat X \end{bmatrix} = U \, \hat T \,  U^\top.
\end{equation}
Combining this identity with \eqref{opt_sc3_srr}, we get
\begin{equation} \label{pr9rel2}
  \left\langle \begin{bmatrix}
    \hat \beta & \hat b^\top \\ \hat b & \hat B 
\end{bmatrix}, \begin{bmatrix}
    1 & \hat x^\top \\ \hat x & \hat X 
\end{bmatrix} \right\rangle =  \left\langle \begin{bmatrix}
    \hat \beta & \hat b^\top \\ \hat b & \hat B 
\end{bmatrix}, U \, \hat T \,  U^\top \right\rangle = \left\langle U^\top \begin{bmatrix}
    \hat \beta & \hat b^\top \\ \hat b & \hat B 
\end{bmatrix} U , \hat T   \right\rangle = 
0.  
\end{equation}
By \eqref{opt_B_b_srr} and \eqref{defK*}, 
\begin{equation} \label{pr9rel3}
U^\top \begin{bmatrix}
    \hat \beta & \hat b^\top \\ \hat b & \hat B 
\end{bmatrix} U \succeq \mathbf{0}.
\end{equation}
Since $\hat T \succeq \mathbf{0}$, it follows from \eqref{pr9rel3} and \eqref{pr9rel2} that
\begin{equation} \label{pr9rel4}
U^\top \begin{bmatrix}
    \hat \beta & \hat b^\top \\ \hat b & \hat B 
\end{bmatrix} U \, \hat T = \mathbf{0}.
\end{equation}
Multiplying \eqref{pr9rel4} from the right by $U^\top$ and using \eqref{pr9rel1} and \eqref{defU}, we obtain
\[
\begin{bmatrix}
    1 &  (x^\circ)^\top \\[0.2em] \mathbf{0} & P^\top 
\end{bmatrix} \begin{bmatrix}
    \hat \beta & \hat b^\top \\[0.2em] \hat b & \hat B 
\end{bmatrix} \begin{bmatrix}
    1 & \hat x^\top \\[0.2em] \hat x & \hat X 
\end{bmatrix} = \begin{bmatrix} 0 & \mathbf{0} \\ \mathbf{0} & \mathbf{0} \end{bmatrix}.
\]
Considering the $(1,1)$-, $(2,1)$-, and $(2,2)$-blocks, we arrive at
\begin{eqnarray} \label{pr9relseq1}
\hat \beta + \hat b^\top \hat x + (x^\circ)^\top \hat b + (x^\circ)^\top B \, \hat x & = & 0, \label{pr9rel5} \\
 P^\top \left( \hat B \, \hat x + \hat b  \right) & = & \mathbf{0}, \label{pr9rel6} \\
 P^\top \left( \hat B \, \hat X + \hat b \, \hat x^\top \right) & = & \mathbf{0}. \label{pr9rel7}
\end{eqnarray}

If $p = 0$, then $P = I$ by \eqref{defMmat0}, which implies that the relations \eqref{rel_betar}, \eqref{rel_zr}, and \eqref{rel_Wr} directly follow from \eqref{pr9rel5}, \eqref{pr9rel6}, \eqref{pr9rel7}, and our conventions for empty matrices and vectors. Furthermore, 
\[
\left\langle \hat B, \hat X - \hat x \, \hat x^\top \right\rangle = \trace \left( \hat B \, \hat X - \hat B \, \hat x \, \hat x^\top \right) = \trace \left( - \hat b \, \hat x^\top + \hat b \, x^\top \right) = 0,
\]
where we used \eqref{rel_Wr} and \eqref{rel_zr} in the penultimate equality. This establishes \eqref{rel_csr}.

Let us now assume that $p > 0$. Since the columns of $H$ form a basis for the null space of $P^\top$, it follows from \eqref{pr9rel6} and  \eqref{pr9rel7} that there exists a unique $(\hat z, \hat W) \in \R^p \times \R^{p \times n}$ such that \eqref{rel_zr} and \eqref{rel_Wr} are satisfied. Substituting \eqref{rel_zr} into \eqref{pr9rel5} and using $H^T x^\circ = h$, we obtain \eqref{rel_betar}. 

Finally, 
\begin{eqnarray*}
0 & \stackrel{\eqref{opt_sc3_srr}}{=} & \left\langle \hat B, \hat X \right\rangle + 2 \, \hat b^\top \hat x + \hat \beta \\
 & \stackrel{\eqref{rel_betar}}{=} & \left\langle \hat B, \hat X \right\rangle + \, \hat b^\top \hat x + h^\top \hat z \\
 & \stackrel{\eqref{rel_zr}}{=} & \left\langle \hat B, \hat X \right\rangle - \hat x^\top \hat B \, \hat x \\
 & = & \left\langle \hat B, \hat X - \hat x \, \hat x^\top \right\rangle,
\end{eqnarray*}
which establishes \eqref{rel_csr} and completes the proof.

\end{proof}

\subsection{SDP-RLT Relaxation of the Complementarity Formulation}

Similar to \eqref{erlt}, we next consider the SDP-RLT relaxation of the complementarity formulation \eqref{eqp} obtained by incorporating the first-order optimality conditions of \eqref{QP}.

\[
\begin{array}{llrcl}
\tag{$\mathsf{SR+}$} \label{esrlt} & \nu^*_{\mathsf{SR+}} = \min\limits_{\substack{x \in \R^n, y \in \R^m, z \in \R^p, \\ X \in \cS^n, Y \in \cS^m, Z \in \cS^p, \\ M_{xy} \in \R^{n \times m}, M_{xz} \in \R^{n \times p}, M_{yz} \in \R^{m \times p}}} & \textstyle\frac{1}{2}\langle Q, X \rangle + c^\top x & & \\
 & \textrm{s.t.} & & & \\
  & & G^\top x & \leq & g\\[0.2em]
 & & H^\top x & = & h \\[0.2em]
  & & H^\top X & = & h \, x^\top \\[0.2em]
   & & G^\top X G - G^\top x \, g^\top - g \, x^\top G + g \, g^\top & \geq & \mathbf{0} \\[0.2em]
 & & Q \, x + c + G \, y + H \, z & = & \mathbf{0}\\[0.2em]
  & & H^\top M_{xy} & = & h \, y^\top \\[0.2em]
  & & H^\top M_{xz} & = & h \, z^\top \\[0.2em]
 & & Q \, X + c \, x^\top + G \, M_{xy}^\top + H \, M_{xz}^\top & = & \mathbf{0} \\[0.2em]
 & & Q \, M_{xy} + c \, y^\top + G \, Y + H \, M_{yz}^\top & = & \mathbf{0} \\[0.2em]
 & & Q \, M_{xz} + c \, z^\top + G \, M_{yz} + H \, Z  & = & \mathbf{0}\\[0.2em]
 & & \diag \left( y \, g^\top - M_{xy}^\top \, G \right) & = & \mathbf{0}\\[0.25em]
 & & y \, g^\top - M_{xy}^\top \, G & \geq & \mathbf{0} \\[0.2em]
 & & y & \geq & \mathbf{0} \\[0.2em]
 & & Y & \geq & \mathbf{0} \\[0.2em]
 & & \langle Q, X \rangle + c^\top x + g^\top y + h^\top z & = & 0 \\[0.2em]
 & & \begin{bmatrix}
 X & M_{xy} & M_{xz} \\[0.2em]
M_{xy}^\top & Y & M_{yz} \\[0.2em]
 M_{xz}^\top & M_{yz}^\top & Z 
 \end{bmatrix} - \begin{bmatrix} x \\ y \\ z \end{bmatrix} \begin{bmatrix} x \\ y \\ z \end{bmatrix}^\top & \succeq & \mathbf{0}.
\end{array}
\]

The next proposition, which is the counterpart of Theorem~\ref{rltcserlt}, establishes our main result in this section.

\begin{theorem} \label{srltcsesrlt}
Suppose that $\nu^*_{\mathsf{SR}} > - \infty$ and \eqref{srlt} has a nonempty set of optimal solutions. Then, the SDP-RLT relaxations \eqref{srlt} and \eqref{esrlt} are equivalent, i.e., $\nu^*_{\mathsf{SR}} = \nu^*_{\mathsf{SR+}}$, and \eqref{esrlt} has a nonempty set of optimal solutions. 
\end{theorem}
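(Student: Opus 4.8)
The plan is to follow the template of Theorem~\ref{rltcserlt}. The inequality $\nu^*_{\mathsf{SR}} \le \nu^*_{\mathsf{SR+}}$ is immediate: any \eqref{esrlt}-feasible point restricts to an \eqref{srlt}-feasible $(x,X)$ with the same objective, since the first four constraints coincide and $X - x\,x^\top \succeq \mathbf{0}$ appears as a principal submatrix of the final conic constraint of \eqref{esrlt}. For the reverse inequality I would take an optimal solution $(\hat x, \hat X)$ of \eqref{srlt}, which exists by hypothesis, apply Proposition~\ref{opt_cond_srr} to obtain dual multipliers $(\hat u, \hat S, \hat\beta, \hat b, \hat B)$ satisfying \eqref{opt_c_srr}--\eqref{opt_sc3_srr}, and apply Proposition~\ref{useful_opt_cond_srlt} to obtain $(\hat z, \hat W)$ satisfying \eqref{rel_betar}--\eqref{rel_csr}. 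Setting $\hat y = \hat S(g - G^\top\hat x) + \hat u$ exactly as before, the combination of \eqref{opt_c_srr}, \eqref{opt_Q_srr}, and \eqref{rel_zr} yields the stationarity equality $Q\,\hat x + c + G\,\hat y + H\,\hat z = \mathbf{0}$, so the $\hat z$ furnished by Proposition~\ref{useful_opt_cond_srlt} plays the role that $-\hat R\,\hat x - \hat w$ played in Theorem~\ref{rltcserlt}.

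The construction of the lifted variables is the heart of the argument and its main difficulty, because -- unlike the RLT case -- no dual multiplier expresses $\hat z$ as an explicit linear function of $\hat x$. I would organize everything around a single factorization. Write $\hat X - \hat x\,\hat x^\top = \hat D\,\hat D^\top$; the feasibility relation $H^\top\hat X = h\,\hat x^\top$ forces $H^\top\hat D = \mathbf{0}$, so the range of $\hat D$ (the ``$x$-noise'' directions) lies in the null space of $H^\top$. The crucial structural fact, obtained by substituting \eqref{rel_Wr} and \eqref{rel_zr}, is
\[
\hat B\left(\hat X - \hat x\,\hat x^\top\right) = H\left(\hat W + \hat z\,\hat x^\top\right),
\]
which shows that $\hat B$ maps the $x$-noise range into the range of $H$. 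Consequently $L_z := -\left(H^\top H\right)^{-1} H^\top \hat B$ satisfies $L_z\left(\hat X - \hat x\,\hat x^\top\right) = -\left(\hat W + \hat z\,\hat x^\top\right)$, realizing the ``$z$-noise'' as a genuine linear image $L_z\,\eta$ of the $x$-noise $\eta$, just as the ``$y$-noise'' is $-\hat S\,G^\top\eta$. I would then \emph{define} all remaining lifted blocks through the covariance identity
\[
\begin{bmatrix} \hat X - \hat x\hat x^\top & \hat M_{xy} - \hat x\hat y^\top & \hat M_{xz} - \hat x\hat z^\top \\ \ast & \hat Y - \hat y\hat y^\top & \hat M_{yz} - \hat y\hat z^\top \\ \ast & \ast & \hat Z - \hat z\hat z^\top \end{bmatrix} = \begin{bmatrix} I \\ -\hat S\,G^\top \\ L_z \end{bmatrix}\left(\hat X - \hat x\,\hat x^\top\right)\begin{bmatrix} I \\ -\hat S\,G^\top \\ L_z \end{bmatrix}^{\!\top},
\]
which makes the final semidefinite constraint of \eqref{esrlt} hold by construction. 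Pleasantly, the $x$--$z$ block then collapses to $\hat M_{xz} = -\hat W^\top$, while $\hat M_{xy}$ and $\hat Y$ reduce to the very RLT formulas \eqref{optU} and \eqref{optY}.

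It then remains to check the linear constraints of \eqref{esrlt}. The first four constraints and the stationarity equality are already in hand; $H^\top\hat M_{xy} = h\,\hat y^\top$ and $H^\top\hat M_{xz} = h\,\hat z^\top$ follow at once from $H^\top\hat D = \mathbf{0}$; the three ``multiply-the-stationarity'' equalities (for instance $Q\,\hat X + c\,\hat x^\top + G\,\hat M_{xy}^\top + H\,\hat M_{xz}^\top = \mathbf{0}$) and the scalar equality $\langle Q,\hat X\rangle + c^\top\hat x + g^\top\hat y + h^\top\hat z = 0$ I would verify by substituting \eqref{opt_c_srr}--\eqref{opt_Q_srr} together with \eqref{rel_betar}--\eqref{rel_csr}, in close analogy with the corresponding computations in Theorem~\ref{rltcserlt} (here the identity $H\hat Z = \hat B\,\hat W^\top - \hat b\,\hat z^\top$ does the bookkeeping that $\hat R,\hat w$ did before); and the sign and diagonal conditions $\hat y \ge \mathbf{0}$, $\hat Y \ge \mathbf{0}$, $\hat y\,g^\top - \hat M_{xy}^\top G \ge \mathbf{0}$, $\diag(\hat y\,g^\top - \hat M_{xy}^\top G) = \mathbf{0}$ follow from \eqref{opt_u_srr}, \eqref{opt_S_srr}, \eqref{opt_cs1_srr}, and \eqref{opt_cs2_srr} exactly as in the RLT proof. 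This produces an \eqref{esrlt}-feasible point with objective value $\tfrac12\langle Q,\hat X\rangle + c^\top\hat x = \nu^*_{\mathsf{SR}}$, so $\nu^*_{\mathsf{SR+}} \le \nu^*_{\mathsf{SR}}$ and hence $\nu^*_{\mathsf{SR}} = \nu^*_{\mathsf{SR+}}$; since this point attains the optimal value, it is optimal for \eqref{esrlt}, which therefore has a nonempty set of optimal solutions. The single point that requires genuine new work, as opposed to transcription from the RLT argument, is isolating the structural relation $\hat B(\hat X - \hat x\,\hat x^\top) = H(\hat W + \hat z\,\hat x^\top)$ and using it to express the $z$-lift through $L_z$; everything else then propagates the rank-one intuition ``replace $\hat x\,\hat x^\top$ by $\hat X$'' across the enlarged semidefinite block.
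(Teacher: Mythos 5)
Your proposal is correct and takes essentially the same approach as the paper: the paper's proof invokes the same Propositions~\ref{opt_cond_srr} and~\ref{useful_opt_cond_srlt}, defines $\hat y$ and $\hat z$ identically (its \eqref{minnorm_z} is your $\hat z$), and your covariance identity with $L_z = -(H^\top H)^{-1}H^\top \hat B$ generates exactly the lifted variables \eqref{esrlt-optY}--\eqref{esrlt-optW} that the paper writes out term by term, the paper itself using your matrix $A$ to verify the semidefinite constraint at the end. The only difference is organizational: you take the factorization $A\,(\hat X - \hat x\,\hat x^\top)\,A^\top$ as the definition from the outset, so the semidefinite constraint holds by construction and $\hat M_{xz} = -\hat W^\top$ emerges in compact form, whereas the paper defines each block by explicit formula and verifies all constraints by direct expansion.
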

\begin{proof}
Our proof is similar to that of Theorem~\ref{rltcserlt}. 

Clearly, $\nu^*_{\mathsf{SR}} \leq \nu^*_{\mathsf{SR+}}$ due to the additional constraints in \eqref{esrlt}. 

For the reverse inequality, suppose that $\nu^*_{\mathsf{SR}} > -\infty$ and let $(\hat x, \hat X) \in \R^n \times \cS^n$ be an optimal solution of \eqref{srlt}. We will construct $\hat y \in \R^m, \hat z \in \R^p, \hat Y \in \cS^m, \hat Z \in \cS^p, \hat M_{xy} \in \R^{n \times m}, \hat M_{xz} \in \R^{n \times p}$, and $\hat M_{yz} \in \R^{m \times p}$ such that $(\hat x, \hat y, \hat z, \hat X, \hat Y, \hat Z, \hat M_{xy}, \hat M_{xz}, \hat M_{yz})$ is \eqref{esrlt}-feasible.

By Propositions~\ref{opt_cond_srr} and~\ref{useful_opt_cond_srlt}, there exists 
$(\hat u, \hat S, \hat \beta, \hat b, \hat B, \hat z, \hat W) \in \R^m \times \cS^m \times \R \times \R^{n} \times \cS^{n} \times \R^p \times \R^{p \times n}$ such that \eqref{opt_c_srr}--\eqref{opt_sc3_srr} and 
\eqref{rel_betar}--\eqref{rel_Wr} are satisfied.

First, assume that $p > 0$. 
By \eqref{rel_zr}, we have $H \, \hat z = -(\hat B \, \hat x + \hat b)$.
Since $H \in \R^{n \times p}$ has full column rank, $\hat z \in \R^p$ is uniquely defined by 
\begin{equation} \label{minnorm_z}
\hat z = - (H^\top H)^{-1} H^\top \left(\hat B \, \hat x + \hat b \right).
\end{equation}
If, on the other hand, $p = 0$, then it follows from \eqref{emptyinv} that $\hat z$ given by \eqref{minnorm_z} is a well-defined $0 \times 1$ empty vector. Therefore, we will continue to use \eqref{minnorm_z} in the remainder of the proof without distinguishing between the cases of $p = 0$ and $p > 0$. 

Combining \eqref{rel_zr} with \eqref{opt_Q_srr} and \eqref{opt_c_srr}, we obtain
\[
Q \, \hat x + c + G \left(\hat S \left( g - G^\top \hat x \right) + \hat u \right) + H \, \hat z = \mathbf{0}.
\]

Therefore, in view of the fifth constraint of \eqref{esrlt}, we similarly define
\begin{equation} \label{esrlt-opty}
\hat y = \hat S \left( g - G^\top \hat x \right) + \hat u,
\end{equation}
and $\hat z$ as in \eqref{minnorm_z}.

Similar to the proof of Theorem~\ref{rltcserlt}, we define each of the remaining lifted variables by using the corresponding rank-one matrix, except that we replace $\hat x \, \hat x^\top$ by $\hat X$:
\begin{align} 
\hat Y = & \, \, \hat S \left( G^\top \hat X G - G^\top \hat x \, g^\top - g \, \hat x^\top G + g \, g^\top \right) \hat S \nonumber \\ 
 & \quad + \hat S \left( g - G^\top \hat x \right) \hat u^\top + \hat u \left(g - G^\top \hat x \right)^\top \hat S + \hat u \, \hat u^\top, \label{esrlt-optY} \\
\hat Z = & \, \, (H^\top H)^{-1} H^\top \left( \hat B \, \hat X \hat B^\top + \hat B \, \hat x \, \hat b^\top + \hat b \, \hat x^\top \hat B^\top + \hat b \, \hat b^\top \right) H (H^\top H)^{-1}, \label{esrlt-optZ} \\
\hat M_{xy} = & \, \, \hat x \, g^\top \hat S - \hat X G \, \hat S + \hat x \, \hat u^\top, \label{esrlt-optU} \\
\hat M_{xz} = & - \hat X \hat B^\top H (H^\top H)^{-1} - \hat x \, \hat b^\top H (H^\top H)^{-1}, \label{esrlt-optV} \\
\hat M_{yz} = & - \hat S \, g \, \hat x^\top \hat B^\top H (H^\top H)^{-1} - \hat S \, g \, \hat b^\top H (H^\top H)^{-1}
+ \hat S \, G^\top \hat X \hat B^\top H (H^\top H)^{-1} \nonumber \\
 & \quad + \hat S \, G^\top \hat x \, \hat b^\top H (H^\top H)^{-1} 
- \hat u \, \hat x^\top \hat B^\top H (H^\top H)^{-1} - \hat u \, \hat b^\top H (H^\top H)^{-1}. \label{esrlt-optW} 
\end{align}

We claim that $(\hat x, \hat y, \hat z, \hat X, \hat Y, \hat Z, \hat M_{xy}, \hat M_{xz}, \hat M_{yz})$ is \eqref{esrlt}-feasible. The first five constraints of \eqref{esrlt} are clearly satisfied. 

For the sixth and seventh constraints, we have
\begin{eqnarray*}
H^\top \hat M_{xy} & \stackrel{\eqref{esrlt-optU}}{=} & h \, g^\top \hat S - h \, \hat x^\top \, G \, \hat S + h \, \hat u^\top \stackrel{\eqref{esrlt-opty}}{=} h \, \hat y^\top, \\
H^\top \hat M_{xz} & \stackrel{\eqref{esrlt-optV}}{=} & 
- h \, \hat x^\top \hat B^\top H (H^\top H)^{-1} - h \, \hat b^\top H (H^\top H)^{-1}, \\
 & = & - h \, \left( \hat B \, \hat x + \hat b \right)^\top H (H^\top H)^{-1}, \\
 & \stackrel{\eqref{rel_zr}}{=} & h \, \hat z^\top, 
\end{eqnarray*}
where we used $H^\top \hat x = h$ and $H^\top \hat X = h \, \hat x^\top$.

Let us next consider the eighth constraint:
\begin{eqnarray*}
Q \, \hat X & \stackrel{\eqref{opt_Q_srr}}{=} & G \, \hat S \, G^\top \hat X + \hat B \, \hat X, \\
c \, \hat x^\top & \stackrel{\eqref{opt_c_srr}}{=} & -G \, \hat u \, \hat x^\top - G \, \hat S \, g \, \hat x^\top + \hat b \, \hat x^\top, \\ 
G \, \hat M_{xy}^\top & \stackrel{\eqref{esrlt-optU}}{=} & G \, \hat S \, g \, \hat x^\top - G \, \hat S \, G^\top \hat X + G \, \hat u \, \hat x^\top , \\  
H \, \hat M_{xz}^\top & \stackrel{\eqref{esrlt-optV}}{=} & - H (H^\top H)^{-1} H^\top \left(
\hat B \, \hat X + \hat b \, \hat x^\top \right) \stackrel{\eqref{rel_Wr}}{=} - \hat B \, \hat X - \hat b \, \hat x^\top.
\end{eqnarray*}
Therefore, 
$Q \, \hat X + c \, \hat x^\top + G \, \hat M_{xy}^\top + H \, \hat M_{xz}^\top = \mathbf{0}$.

Next, we focus on the ninth constraint:
\begin{eqnarray*}
Q \, \hat M_{xy} & \stackrel{\eqref{opt_Q_srr}, \eqref{esrlt-optU}}{=} & G \, \hat S \, G^\top \hat x \, g^\top \hat S 
 - G \, \hat S \, G^\top \hat X G \, \hat S 
 + G \, \hat S \, G^\top \hat x \, \hat u^\top \\
 & & \quad
 + \hat B \, \hat x \, g^\top \hat S
 - \hat B \, \hat X G \, \hat S
 + \hat B \, \hat x \, \hat u^\top, \\
c \, \hat y^\top  & \stackrel{\eqref{opt_c_srr},\eqref{esrlt-opty}}{=} & 
 -G \, \hat u \, g^\top \hat S 
 + G \, \hat u \, \hat x^\top G \, \hat S
 -G \, \hat u \, \hat u^\top
 - G \, \hat S \, g \, g^\top \hat S 
  \\
 & & \quad + G \, \hat S \, g \, \hat x^\top G \, \hat S - G \, \hat S \, g \, \hat u^\top
+ \hat b \, g^\top \hat S 
- \hat b \, \hat x^\top G \, \hat S
+ \hat b \, \hat u^\top,
\\
G \, \hat Y & \stackrel{\eqref{esrlt-optY}}{=} & G \, \hat S \, G^\top \hat X G \, \hat S - G \, \hat S \, G^\top \hat x \, g^\top \hat S - G \, \hat S \, g \, \hat x^\top G \, \hat S + G \, \hat S \, g \, g^\top \hat S   \\ 
 & & \quad + G \, \hat S \, g \, \hat u^\top - G \, \hat S \, G^\top \hat x \, \hat u^\top + G \, \hat u \, g^\top \hat S -  G \, \hat u \, \hat x^\top G \, \hat S  + G \, \hat u \, \hat u^\top, \\
H \, \hat M_{yz}^\top & \stackrel{\eqref{esrlt-optW}}{=} & H (H^\top H)^{-1} H^\top \left( - \hat B \, \hat x \, \hat g^\top \hat S - \hat b \, \hat g^\top \hat S + \hat B \, \hat X G \, \hat S + \hat b \, \hat x^\top G \, \hat S \right) \\
 & & - H (H^\top H)^{-1} H^\top \left( \hat B \, \hat x \, \hat u^\top + \hat b \, \hat u^\top \right) \\
 & = & H (H^\top H)^{-1} H^\top \left( - \left( \hat B \, \hat x + \hat b \right) \left( \hat g^\top \hat S + \hat u^\top \right) + \left( \hat B \, \hat X + \hat b \, \hat x^\top \right) G \, \hat S \right) \\
 & \stackrel{\eqref{rel_zr},\eqref{rel_Wr}}{=} &  - \hat B \, \hat x \, \hat g^\top \hat S - \hat b \, \hat g^\top \hat S  - \hat B \, \hat x \, \hat u^\top - \hat b \, \hat u^\top + \hat B \, \hat X G \, \hat S + \hat b \, \hat x^\top G \, \hat S. 
\end{eqnarray*} 
It follows that 
$Q \, \hat M_{xy} + c \, \hat y^\top + G \, \hat Y + H \, \hat M_{yz}^\top = \mathbf{0}$.

Considering the tenth constraint, we obtain
\begin{eqnarray*}
Q \, \hat M_{xz} & \stackrel{\eqref{opt_Q_srr},\eqref{esrlt-optV}}{=} & - G \, \hat S \, G^\top \hat X \hat B^\top H (H^\top H)^{-1} - G \, \hat S \, G^\top \hat x \, \hat b^\top H (H^\top H)^{-1} \\ 
 & & \quad - \hat B \, \hat X \hat B^\top H (H^\top H)^{-1} - \hat B \, \hat x \, \hat b^\top H (H^\top H)^{-1}, \\
c \, \hat z^\top & \stackrel{\eqref{opt_c_srr},\eqref{minnorm_z}}{=} & G \, \hat u \, \hat x^\top \hat B^\top H (H^\top H)^{-1} + G \, \hat u \, \hat b^\top H (H^\top H)^{-1} \\ 
 & & \quad + G \, \hat S \, g \, \hat x^\top \hat B^\top H (H^\top H)^{-1} + G \, \hat S \, g \, \hat b^\top H (H^\top H)^{-1} \\
 & & \quad  - \hat b \, \hat x^\top \hat B^\top H (H^\top H)^{-1} - \hat b \, \hat b^\top H (H^\top H)^{-1}, \\
G \, \hat M_{yz} & \stackrel{\eqref{esrlt-optW}}{=} & - G \, \hat S \, g \, \hat x^\top \hat B^\top H (H^\top H)^{-1} - G \, \hat S \, g \, \hat b^\top H (H^\top H)^{-1} \\
 & & \quad 
+ G \, \hat S \, G^\top \hat X \hat B^\top H (H^\top H)^{-1} + G \, \hat S \, G^\top \hat x \, \hat b^\top H (H^\top H)^{-1} \\
 & & \quad 
- G \, \hat u \, \hat x^\top \hat B^\top H (H^\top H)^{-1} - G \, \hat u \, \hat b^\top H (H^\top H)^{-1}, \\ 
H \, \hat Z & \stackrel{\eqref{esrlt-optZ}}{=} & H (H^\top H)^{-1} H^\top \left( \hat B \, \hat X \hat B^\top + \hat B \, \hat x \, \hat b^\top + \hat b \, \hat x^\top \hat B^\top + \hat b \, \hat b^\top \right) H (H^\top H)^{-1} \\
 & = & H (H^\top H)^{-1} H^\top \left( \left( \hat B \, \hat X + \hat b \, \hat x^\top \right) \hat B^\top + \left(   \hat B \, \hat x + \hat b \right) \hat b^\top \right) H (H^\top H)^{-1} \\
 & \stackrel{\eqref{rel_zr},\eqref{rel_Wr}}{=} & \hat B \, \hat X \hat B^\top H (H^\top H)^{-1} + \hat b \, \hat x^\top \hat B^\top H (H^\top H)^{-1} \\
 & & \quad + \hat B \, \hat x \, \hat b^\top H (H^\top H)^{-1} + \hat b \, \hat b^\top H (H^\top H)^{-1}. 
\end{eqnarray*}
We conclude that 
$Q \, \hat M_{xz} + c \, \hat z^\top + G \, \hat M_{yz} + H \, \hat Z = \mathbf{0}$.

We next focus on the eleventh and twelfth constraints:
\begin{eqnarray*}
\hat y \, g^\top - \hat M_{xy}^\top \, G & \stackrel{\eqref{esrlt-opty},\eqref{esrlt-optU}}{=} & \hat S \, g \, g^\top - \hat S \, G^\top \hat x \, g^\top + \hat u \, g^\top - \hat S \, g \, \hat x^\top G + \hat S \, G^\top \hat X G - \hat u \, \hat x^\top G  \\
 & = & \hat u \left( g - G^\top \hat x \right)^\top + \hat S \left( G^\top \hat X G - G^\top \hat x \, g^\top - g \, \hat x^\top G + g \, g^\top \right). 
\end{eqnarray*}
Using \eqref{opt_u_srr}, the first constraint, \eqref{opt_S_srr}, and the fourth constraint, we conclude that $\hat y \, g^\top - \hat M_{xy}^\top \, G \geq \mathbf{0}$. By \eqref{opt_cs1_srr} and \eqref{opt_cs2_srr}, we obtain $\diag \left( \hat y \, g^\top - \hat M_{xy}^\top \, G \right) = \mathbf{0}$. 

Next, it follows from \eqref{esrlt-opty}, \eqref{esrlt-optY}, \eqref{opt_u_srr}, the first constraint, \eqref{opt_S_srr}, and the fourth constraint that $\hat y \geq \mathbf{0}$ and $\hat Y \geq \mathbf{0}$. Considering the fifteenth constraint, we arrive at 
\begin{eqnarray*}
    \langle Q, \hat X \rangle & \stackrel{\eqref{opt_Q_srr}}{=} & \langle \hat S, G^\top \hat X G \rangle + \langle \hat B, \hat X \rangle, \\
    c^\top \hat x & \stackrel{\eqref{opt_c_srr}}{=} & - \hat u^\top G^\top \hat x - g^\top \hat S \, G^\top \hat x + \hat b^\top \hat x, \\
    g^\top \hat y & \stackrel{\eqref{esrlt-opty}}{=} & g^\top \hat S \, g - g^\top \hat S \, G^\top \hat x + g^\top \hat u, \\
    h^\top \hat z & \stackrel{\eqref{minnorm_z}}{=} & -h^\top (H^\top H)^{-1} H^\top \hat B \, \hat x - h^\top (H^\top H)^{-1} H^\top \hat b.
\end{eqnarray*}
Substituting $H^\top \hat x = h$ in the fourth line, it follows that
\begin{eqnarray*}
\langle Q, \hat X \rangle + c^\top \hat x + g^\top \hat y + h^\top \hat z & = & \hat u^\top \left( g - G^\top \hat x \right) \\
 & & \quad + 
\left\langle \hat S, G^\top \hat X G - G^\top \hat x \, g^\top - g \, \hat x^\top G + g \, g^\top \right\rangle \\
 & & \quad + \langle \hat B, \hat X \rangle + \hat b^\top \hat x \\
& & \quad - \hat x^\top H (H^\top H)^{-1} H^\top \left( \hat B \, \hat x + \hat b \right)\\
 & \stackrel{\eqref{opt_u_srr},\eqref{opt_S_srr}}{=} & \langle \hat B, \hat X \rangle + \hat b^\top \hat x \\
  & & \quad - \hat x^\top H (H^\top H)^{-1} H^\top \left( \hat B \, \hat x + \hat b \right)\\
 & \stackrel{\eqref{rel_zr}}{=} & \langle \hat B, \hat X \rangle + \hat b^\top \hat x - \hat x^\top \hat B \, \hat x - \hat b^\top \hat x \\
 & = & \left\langle \hat B, \hat X - \hat x \, \hat x^\top \right\rangle \\
 & \stackrel{\eqref{rel_csr}}{=} & 0.
\end{eqnarray*}

Finally, since $\hat X - \hat x \hat x^\top \succeq 0$, we have 
\[
\begin{bmatrix}
 X & M_{xy} & M_{xz} \\
M_{xy}^\top & Y & M_{yz} \\
 M_{xz}^\top & M_{yz}^\top & Z 
 \end{bmatrix} - \begin{bmatrix} x \\ y \\ z \end{bmatrix} \begin{bmatrix} x \\ y \\ z \end{bmatrix}^\top = A \left( \hat X - \hat x \, \hat x^\top \right) A^\top \succeq \mathbf{0},
 \]
 where 
 \[
 A =  \begin{bmatrix} I \\ - S \, G^\top \\ - (H^\top H)^{-1} H^\top B \end{bmatrix} \in \R^{(n + m + p) \times n}.
 \]
We conclude that the semidefinite constraint is satisfied. 

Therefore, $(\hat x, \hat y, \hat z, \hat X, \hat Y, \hat Z, \hat M_{xy}, \hat M_{xz}, \hat M_{yz})$ is \eqref{esrlt}-feasible, which implies that $\nu^*_{\mathsf{SR+}} \leq \nu^*_{\mathsf{SR}}$. We conclude that $\nu^*_{\mathsf{SR+}} =  \nu^*_{\mathsf{SR}}$.

\end{proof}

\begin{corollary} \label{polytope_imp_srlt}
If $\mathsf{F}$ is nonempty and bounded, then $\nu^*_{\mathsf{SR+}} =  \nu^*_{\mathsf{SR}}$.    
\end{corollary}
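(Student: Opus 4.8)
The plan is to reduce the claim to Theorem~\ref{srltcsesrlt}, whose hypotheses require that $\nu^*_{\mathsf{SR}}$ be finite and that \eqref{srlt} attain its optimal value. Thus the entire task is to verify these two hypotheses under the assumption that $\mathsf{F}$ is nonempty and bounded, after which the conclusion $\nu^*_{\mathsf{SR}} = \nu^*_{\mathsf{SR+}}$ is immediate.

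First I would exploit the containment of feasible regions. The feasible region of \eqref{srlt} is obtained from that of \eqref{rlt} by appending the semidefinite constraint $X - x\,x^\top \succeq \mathbf{0}$; hence it is a subset of the \eqref{rlt}-feasible region. Since $\mathsf{F}$ is nonempty and bounded, \cite[Lemma 7]{QiuY23a} guarantees that the feasible region of \eqref{rlt} is nonempty and bounded, and therefore the feasible region of \eqref{srlt} is bounded as well. Nonemptiness of the \eqref{srlt}-feasible region is also clear: for any $x \in \mathsf{F}$, the pair $(x, x\,x^\top)$ is \eqref{srlt}-feasible, exactly as observed in the derivation of \eqref{srltrlt-lb}.

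Next I would argue attainment via compactness. The feasible region of \eqref{srlt} is closed, being the intersection of the closed sets cut out by the linear equalities, the linear inequalities, and the semidefinite constraint (equivalently, the closed set $\{(x,X): \begin{bmatrix} 1 & x^\top \\ x & X \end{bmatrix} \succeq \mathbf{0}\}$ via \eqref{Schurcomp}). A closed and bounded subset of $\R^n \times \cS^n$ is compact, so the feasible region of \eqref{srlt} is nonempty and compact. As the objective $\textstyle\frac{1}{2}\langle Q, X\rangle + c^\top x$ is linear, hence continuous, the Weierstrass theorem yields that the minimum is attained; in particular $\nu^*_{\mathsf{SR}} > -\infty$ and the set of optimal solutions of \eqref{srlt} is nonempty.

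With both hypotheses of Theorem~\ref{srltcsesrlt} now established, I would invoke that theorem to conclude $\nu^*_{\mathsf{SR}} = \nu^*_{\mathsf{SR+}}$, which completes the proof. No step presents a genuine obstacle; the corollary is essentially the SDP-RLT analogue of Corollary~\ref{polytope_imp}, and the only point requiring a little care is the attainment argument, which itself reduces to the fact that \eqref{srlt} inherits boundedness from \eqref{rlt} through the containment of feasible regions together with the already-cited boundedness result for \eqref{rlt}.
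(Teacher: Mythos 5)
Your proposal is correct and follows essentially the same route as the paper's own proof: nonemptiness via $(\hat x, \hat x\,\hat x^\top)$ for $\hat x \in \mathsf{F}$, boundedness of the \eqref{srlt}-feasible region by containment in the \eqref{rlt}-feasible region together with \cite[Lemma 7]{QiuY23a}, compactness yielding attainment, and then Theorem~\ref{srltcsesrlt}. You merely make explicit the closedness/Weierstrass step that the paper leaves implicit.
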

\begin{proof}
For any $\hat x \in \mathsf{F}$, $(\hat x, \hat x \, \hat x^T)$ is \eqref{srlt}-feasible. 
Since the feasible region of \eqref{srlt} is a subset of the feasible region of \eqref{rlt}, we conclude that it is bounded by \cite[Lemma 7]{QiuY23a}, which implies that it is nonempty and compact. Therefore, $\nu^*_{\mathsf{SR}}$ is finite and the set of optimal solutions is nonempty. The assertion follows from Theorem~\ref{srltcsesrlt}.

\end{proof}

Theorem~\ref{srltcsesrlt} establishes the equivalence of the SDP-RLT relaxations \eqref{srlt} and \eqref{esrlt} under the assumption that the optimal value is attained in \eqref{srlt}. We close this section with a brief discussion of instances of \eqref{QP} that admit an unbounded SDP-RLT relaxation. 

Let us consider the same families of instances of \eqref{QP} in Examples~\ref{ubex0}--\ref{ubex3}. It is well-known and easy to verify that the Shor relaxation is exact whenever the objective function is convex. Therefore, in Examples~\ref{ubex0} and~\ref{ubexparam1}, we obtain $\nu^*_{\mathsf{SR+}} (\alpha) =  \nu^*_{\mathsf{SR}} (\alpha) = \nu^* (\alpha)$ for each $\alpha \in \R$ by Theorem~\ref{srltcsesrlt}. In Examples~\ref{ubex2} and~\ref{ubex3}, we have $\nu^*_{\mathsf{SR}}(\alpha) = \nu^*(\alpha) = -\infty$ by \eqref{srltrlt-lb}.
On the other hand, recall that \eqref{QP} and \eqref{eqp} are no longer equivalent in these examples. Since the RLT relaxation \eqref{erlt} is already an exact relaxation of \eqref{eqp} (see the discussion at the end of Section~\ref{Sec3}), it follows from \eqref{srltrlt-lb} that $-\infty = \nu^*(\alpha) < \nu^*_{\mathsf{+}}(\alpha) = \nu^*_{\mathsf{R+}}(\alpha) = \nu^*_{\mathsf{SR+}}(\alpha)$ for each $\alpha \in \R$. Therefore, similar to the RLT relaxation \eqref{erlt}, the SDP-RLT relaxation \eqref{esrlt} is no longer a valid relaxation of \eqref{QP}. Unless a quadratic program is known to have a finite optimal value, we conclude that relaxations of optimality conditions should be considered with some caveat.

\section{Concluding Remarks} \label{Sec5}

In this paper, we considered whether the RLT and SDP-RLT relaxations of general quadratic programs can be strengthened by incorporating optimality conditions. We established a negative answer under the assumption that the RLT and SDP-RLT bounds are finite. 

For instances of quadratic programs with unbounded RLT or SDP-RLT relaxations, we presented several families of examples that illustrate different scenarios. In particular, if the quadratic program does not have a finite optimal value, our examples reveal that relaxations of the optimality conditions may even yield misleading information about the original problem. 

We considered the effect of relaxations of first-order optimality conditions. By exploiting optimality conditions for box-constrained quadratic programs, Burer and Chen~\cite{BurerC11} propose a novel semidefinite programming relaxation that incorporates additional information from the second-order optimality conditions. We intend to investigate whether such a construction could be extended to general quadratic programs and whether the resulting relaxation remains equivalent to the original semidefinite relaxation.













\section*{Conflict of interest}

The authors declare that they have no conflict of interest.

\section*{Availability of Data and Materials}

The manuscript has no associated data.

\bibliographystyle{spmpsci}      
\bibliography{sn-bibliography} 

\end{document}